\documentclass[11pt]{article}
\usepackage{amsmath,amssymb,amsthm, latexsym,color,epsfig,enumerate,a4, graphicx}

\oddsidemargin  0pt
\evensidemargin 0pt
\marginparwidth 40pt
\marginparsep 10pt
\topmargin 0pt
\headsep 0pt

\textheight 8.75in
\textwidth 6.6in

\theoremstyle{plain}
\newtheorem{theorem}{Theorem}[section]
\newtheorem{lemma}[theorem]{Lemma}
\newtheorem{claim}[theorem]{Claim}

\newtheorem{definition}[theorem]{Definition}

\newcommand{\eps}{\ensuremath{\varepsilon}}
\newcommand{\Gnp}{\ensuremath{\mathcal \mathcal G(n,p)}}


\date{}
\title{\vspace{-0.7cm}Almost-spanning universality in random graphs}
\author{
David Conlon\thanks{Mathematical Institute, Oxford OX2 6GG, United
Kingdom. Email: {\tt david.conlon@maths.ox.ac.uk}. Research
supported by a Royal Society University Research Fellowship.} \and
Asaf Ferber\thanks{Department of Mathematics, Yale University and
Department of Mathematics, MIT. Email: {\tt asaf.ferber@yale.edu,
ferbera@mit.edu}.} \and Rajko Nenadov$^\ddag$ \and Nemanja \v
Skori\'c\thanks{Institute of Theoretical Computer Science, ETH
Z\"urich, 8092 Z\"urich, Switzerland. \newline Email: {\tt
\{rnenadov|nskoric\}@inf.ethz.ch}.} }

\begin{document}
\maketitle

\begin{abstract}
A graph $G$ is said to be $\mathcal H(n,\Delta)$-universal if it
contains every graph on at most $n$ vertices with maximum degree at most
$\Delta$. It is known that for any $\varepsilon > 0$ and any natural
number $\Delta$ there exists $c > 0$ such that the random graph $G(n,p)$ 
is asymptotically almost surely $\mathcal
H((1-\varepsilon)n,\Delta)$-universal for $p \geq c (\log
n/n)^{1/\Delta}$. Bypassing this natural boundary, we show that for $\Delta \geq 3$ the same
conclusion holds when $p \gg n^{-\frac{1}{\Delta-1}}\log^5 n$.
\end{abstract}

\section{Introduction}

Given a family of graphs $\mathcal H$, a graph $G$ is said to be $\mathcal H$-universal if it contains every member of $\mathcal H$ as a subgraph (not necessarily induced). 
Universal graphs have been studied quite extensively, particularly with respect to families of forests, planar graphs and graphs of bounded degree (see, for example, \cite{AC08, ACKRRS, BLC, BCS, BCLR, CG, DKRR, FNP, KL} and their references). In particular, it is of interest to find sparse universal graphs.

Let $\mathcal H(n, \Delta)$ be the family of all graphs on at most $n$ vertices with maximum degree at most $\Delta$. Building on earlier work with several authors \cite{AA, ACKRRS, ACKRRS01}, Alon and Capalbo \cite{AC, AC08} showed that there are graphs with at most $c_{\Delta} n^{2 - 2/\Delta}$ edges which are $\mathcal H(n, \Delta)$-universal. A simple counting argument shows that this result is best possible.

The construction of Alon and Capalbo is explicit. An earlier approach had been to study whether random graphs could be $\mathcal H(n, \Delta)$-universal. The binomial random graph $\Gnp$ is the graph formed by choosing every edge of a graph on $n$ vertices independently with probability $p$. We say that $\Gnp$ satisfies a property $\mathcal{P}$ asymptotically almost surely (a.a.s.) if $\Pr\left[\Gnp \in \mathcal{P}\right]$ tends to $1$ as $n$ tends to infinity. The first result on universality in random graphs was proved by Alon, Capalbo, Kohayakawa, R\"odl, Ruci\'nski and Szemer\'edi \cite{ACKRRS}, who showed that for any $\varepsilon > 0$ and any natural number $\Delta$ there exists a constant $c > 0$ such that the random graph $\Gnp$ is a.a.s.~$\mathcal H((1-\varepsilon)n, \Delta)$-universal for $p \geq c (\log n/n)^{1/\Delta}$. 

In this theorem, some slack is allowed by only asking that the random graph contains subgraphs with $(1 - \varepsilon)n$ vertices. However, one can also ask whether the random graph $\Gnp$ contains all subgraphs of maximum degree $\Delta$ with exactly $n$ vertices, that is, whether it is $\mathcal H(n, \Delta)$-universal. Because we no longer have any extra room to manoeuvre, this problem is substantially more difficult to treat than the almost-spanning version. Nevertheless, Dellamonica, Kohayakawa, R\"odl and Ruci\'nski \cite{DKRR} have shown that for any natural number $\Delta \geq 3$ there exists a constant $c > 0$ such that $\Gnp$ is a.a.s.~$\mathcal H(n, \Delta)$-universal for $p \geq c (\log n/n)^{1/\Delta}$. The case $\Delta=2$ was later treated by Kim and Lee~\cite{KL}, who obtained similar bounds to those in \cite{DKRR}. 

These results on embedding large bounded-degree graphs in the random graph have proved useful in other contexts \cite{BKT, KRSS}. To give an example, we define the size Ramsey number $\hat{r}(H)$ of a graph $H$ to be the smallest number of edges $m$ in a graph $G$ which is Ramsey with respect to $H$, that is, such that any 2-colouring of the edges of $G$ contains a monochromatic copy of $H$. A famous result of Beck \cite{Be83} states that the size Ramsey number of the path $P_n$ with $n$ vertices is at most $c n$ for some fixed constant $c > 0$. An extension of this result to graphs of maximum degree $\Delta$ was recently given by Kohayakawa, R\"odl, Schacht and Szemer\'edi \cite{KRSS}, who showed that there is a constant $c > 0$ depending only on $\Delta$ such that if $H$ is a graph with $n$ vertices and maximum degree $\Delta$ then
\[\hat{r}(H) \leq c n^{2 - 1/\Delta} (\log n)^{1/\Delta}.\]
A key component in their proof is an embedding lemma like that used in~\cite{ACKRRS}. One could therefore hope to improve this bound by improving the bounds for universality in random graphs.

Here we make some initial progress on these problems by improving the theorem of Alon, Capalbo, Kohayakawa, R\"odl, Ruci\'nski and Szemer\'edi \cite{ACKRRS} on  almost-spanning universality as follows.

\begin{theorem} \label{thm:main_universal}
For any constant $\eps > 0$ and integer $\Delta \ge 3$, the random graph $\Gnp$ is a.a.s.\ universal for the family $\mathcal{H}((1 -
\eps)n, \Delta)$, provided that $p \gg n^{-\frac{1}{\Delta - 1}} \log^5 n$.
\end{theorem}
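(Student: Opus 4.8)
The plan is to pass from $\Gnp$ to a deterministic pseudorandom setting, then build the embedding greedily, using a reserved set of vertices of $G$ to absorb the degree-$\Delta$ vertices of $H$. Write $m\le(1-\eps)n$ for the order of $H$; we may assume $m=\Omega(n)$, as otherwise the statement is easy. Since $p\gg n^{-1/(\Delta-1)}\log^5 n$ we have $np^{\Delta-1}\gg\log^5 n$, so a routine first/second moment computation followed by a union bound shows that $\Gnp$ a.a.s.\ satisfies: (P1) every set of at most $\Delta-1$ vertices has $(1+o(1))np^{j}$ common neighbours, where $j$ is its size, in particular $\gg\log^5 n$ of them; (P2) these common neighbourhoods are well spread -- e.g.\ every linear-sized vertex set meets a positive fraction of any such common neighbourhood, and no vertex is a common neighbour of abnormally many $(\Delta-1)$-sets taken from a prescribed linear-sized set; (P3) $\Gnp$ contains no unusually dense small subgraph, and it is above the relevant $F$-factor thresholds for the finitely many small graphs $F$ with $v(F)\le\Delta+1$ that occur in the argument (this is where $\Delta\ge 3$ is used). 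It then suffices to show that any $n$-vertex graph $G$ with (P1)--(P3) is $\mathcal H((1-\eps)n,\Delta)$-universal.

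Next I would fix a decomposition of the target $H\in\mathcal H((1-\eps)n,\Delta)$. Let $W$ be a maximal independent set of $H$, so that every vertex of $A:=V(H)\setminus W$ has a neighbour in $W$ and hence $H[A]$ has maximum degree at most $\Delta-1$; split $W=W^{\ast}\cup W'$ into the vertices of degree exactly $\Delta$ and the rest, noting that every $w\in W'$ has at most $\Delta-1$ neighbours, all lying in $A$. The vertices of $W^{\ast}$ are the sole obstruction to a naive greedy embedding: for $w\in W^{\ast}$ all $\Delta$ neighbours lie in $A$, and once they are embedded one would need the $\Delta$-set of their images to have a common neighbour, which typically fails for $p\ll n^{-1/\Delta}$. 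The idea is therefore to commit, in advance, to the image of each $w\in W^{\ast}$.

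Concretely, reserve a quasirandom set $R\subseteq V(G)$ with $|W|<|R|=|W|+\Theta(\eps n)$, and choose an injection $\iota\colon W^{\ast}\to R$, $w\mapsto r_w$, \emph{compatible} with the bipartite graph between $A$ and $W^{\ast}$: for each $x\in A$ the set $\{\,r_w:w\in N_H(x)\cap W^{\ast}\,\}$ should have many common neighbours in $V(G)\setminus R$. Then embed $H[A]$ into $V(G)\setminus R$ vertex by vertex in a carefully chosen order, placing $x\in A$ into $N_G(\phi(x'))$ for each already-embedded $A$-neighbour $x'$ of $x$ (at most $\Delta-1$ constraints, by the choice of $W$) and into $N_G(r_w)$ for each $w\in N_H(x)\cap W^{\ast}$, using (P1)--(P3) and the compatibility of $\iota$ to keep every not-yet-placed vertex with a large admissible set. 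Having done this, set $\phi(w):=r_w$ for $w\in W^{\ast}$: by construction $r_w$ is adjacent to all of $\phi(N_H(w))$. Finally embed $W'$ into the still-unused part of $R$: each such $w$ has at most $\Delta-1$ neighbours, so by (P1) its image-neighbourhood has $\gg\log^5 n$ common neighbours in $R$, and a Hall-type matching argument (using (P2) to exclude bottlenecks) finishes the embedding; isolated and other trivial vertices of $H$ go into leftover vertices of $G$, which exist since $m\le(1-\eps)n$.

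The heart of the matter -- and the step I expect to be hardest -- is carrying out the compatible injection $\iota$ and the constrained greedy embedding of $H[A]$ \emph{simultaneously}, so that admissible sets never collapse. The tension is sharpest for a vertex $x\in A$ all of whose $\Delta$ neighbours lie in $W^{\ast}$ (such $x$ can even make up all of $A$, e.g.\ when $H$ is a disjoint union of copies of $K_{\Delta,\Delta}$): its admissible set is then an intersection of $\Delta$ neighbourhoods of $G$, which has expected size $np^{\Delta}=o(1)$ for a generic choice of the $r_w$ and so would be empty. One must exploit the freedom in $\iota$ -- forcing such $\{r_w\}$ to be "good" $\Delta$-sets of $R$, i.e.\ to have a common neighbour (this is exactly where the $F$-factor type input of (P3) and the hypothesis $\Delta\ge3$ enter) -- together with a processing order that disposes of the heavily-constrained vertices of $A$ before $G$ has been substantially used up. Balancing these requirements, essentially an intricate system-of-distinct-representatives argument interwoven with the greedy embedding and calibrated against the expansion in (P1)--(P3), is where the real work lies.
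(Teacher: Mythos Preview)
Your decomposition via a maximal independent set $W$ is natural, but the step you flag as ``where the real work lies'' is in fact a genuine gap, not just a technicality, and your outline does not contain the idea needed to close it.

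Concretely, take $H$ to be a disjoint union of copies of $K_{\Delta,\Delta}$ (or, more generally, any bipartite $\Delta$-regular graph). Then $W=W^{\ast}$ is one side, $A$ is the other, and \emph{every} $x\in A$ has all $\Delta$ neighbours in $W^{\ast}$. To place such an $x$ you must find a common neighbour of the $\Delta$-set $\{r_w:w\in N_H(x)\}$, but at $p=n^{-1/(\Delta-1)+o(1)}$ a generic $\Delta$-set has no common neighbour. You propose to cure this by choosing $\iota$ so that the relevant $\Delta$-sets are ``good'', citing (P3). However, (P3) as stated only gives $F$-factor information for $v(F)\le\Delta+1$, whereas here the relevant local structure is $K_{\Delta,\Delta}$ on $2\Delta$ vertices; and even granting a $K_{\Delta,\Delta}$-factor, the $\Delta$-sets you need are \emph{not} arbitrary: they must interlock exactly according to the bipartite $\Delta$-regular structure of $H$ between $A$ and $W^{\ast}$. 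Producing a compatible $\iota$ together with a system of distinct representatives for the $x$'s is then tantamount to embedding an arbitrary bipartite $\Delta$-regular graph on $(1-\eps)n$ vertices --- which is the original problem restricted to a class that already contains the hard instances. Nothing in (P1)--(P3) or in the sketched greedy/Hall machinery reduces this.

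The paper circumvents exactly this obstruction by a different structural preprocessing. It does not remove an independent set; instead it observes (Claim in the proof) that every vertex of the large-component part $H_1$ lies within distance $\log n$ of either a vertex of degree $\le\Delta-1$ or an induced cycle of length $\le 2\log n$. Taking a well-spaced set $I$ and deleting one short cycle near each $v\in I$ that falls in the second case produces $H_2$ in which every vertex is within bounded distance of a vertex of degree $\le\Delta-1$. This yields a layering $W_0,\dots,W_t$ with backward degree at most $\Delta-1$, so that $H_2\in\mathcal F(\cdot,\Theta(\log^3 n),\cdot,\Delta-1)$ and the Ferber--Nenadov--Peter universality theorem embeds it directly at $p\gg n^{-1/(\Delta-1)}\log^5 n$. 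The removed cycles --- at most $O(n/\log^3 n)$ of them, each of length $\le 2\log n$, with prescribed $(\Delta-2)$-neighbourhoods already placed --- are then reinserted via an Aharoni--Haxell hypergraph matching argument (Janson's inequality supplies the required large matchings). Small components are handled last by a crude universality lemma. The point is that the paper never needs to hit $\Delta$ simultaneous constraints: removing short cycles creates enough low-degree vertices to make a $(\Delta-1)$-backward-degree ordering exist globally, and the cycles themselves are few and short enough to be put back by matching. Your independent-set decomposition gives no analogous control, and the residual problem it leaves is essentially the full theorem.
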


This result bypasses a natural barrier, since $(\log
n/n)^{1/\Delta}$ is roughly the lowest probability at which we can expect that every
collection of $\Delta$ vertices will have many neighbors in common,
a condition which is extremely useful if one wishes to embed graphs
of maximum degree $\Delta$. On the other hand, the lowest probability at which one might hope that the
random graph $\Gnp$ is a.a.s.~$\mathcal H((1- \varepsilon)n, \Delta)$-universal is $n^{- 2/(\Delta+1)}$.
Indeed, below this probability, $\Gnp$ will typically not contain $(1 - \varepsilon) \frac{n}{\Delta +
1}$ vertex-disjoint copies of $K_{\Delta + 1}$ (see, for example,~\cite{JLR}). Thus, for $\Delta = 3$ our result is optimal up to the logarithmic factor, while for $\Delta \ge 4$ the gap remains.

In proving Theorem~\ref{thm:main_universal}, we will make use of a recent result of Ferber, Nenadov and Peter~\cite{FNP} which improves the bounds in~\cite{ACKRRS01} and \cite{DKRR} for bounded-degree graphs which satisfy certain density and partitioning properties (see Section~\ref{sec:FNP}). When embedding a graph $H \in \mathcal H((1 - \varepsilon)n, \Delta)$, we will first find a subgraph $H' \subseteq H$ suitable for the application of the main result of \cite{FNP} by removing all small components and certain short cycles in $H$. We then embed $H'$, after which we place the deleted pieces in an appropriate way so as to obtain an embedding of $H$.

\subsection{Notation}
For a graph $G = (V, E)$, we denote by $v(G)$ and $e(G)$ the size of the vertex and edge sets, respectively. For a vertex $v \in V$, we write $\Gamma_G^{(i)}(v) := \{w \in V \, : \, \mathrm{dist}(v, w) = i\}$ for the set of vertices at distance exactly $i$ from $v$. For simplicity, we let $\Gamma_G^{(0)}(v) := \{v\}$ and $\Gamma_G(v) := \Gamma_G^{(1)}(v)$. Furthermore, for a set $S \subseteq V$, we define $\Gamma_G^{(i)}(S) :=
\{w \in V \, : \,  \min_{v \in S} \mathrm{dist}(v, w) = i\}$.
Similarly, we let $B_G^{(i)}(v) := \bigcup_{j = 0}^i \Gamma_G^{(j)}(v)$ be the \emph{ball} of radius $i$ around $v$ in $G$, i.e., the set of all vertices at distance at most $i$ from $v$. For an integer $k$ and a set of vertices $S \subseteq V$, we say that $S$ is \emph{$k$-independent} if $B^{(k)}_G(v) \cap (S \setminus \{v\}) = \emptyset$ for every $v \in S$, i.e., every two vertices in $S$ are at distance at least $k+1$ in $G$. If there is no risk of ambiguity, we omit $G$ from the subscript.

Given a (hyper)graph $H = (V, E)$, we say that two edges $e_1, e_2 \in E$ are independent if $e_1 \cap e_2 = \emptyset$. For two graphs $G'$ and $G''$, we write $G' \cong G''$ if they are isomorphic. A family of subsets $A_1, \ldots, A_k \subseteq V$, for some integer $k$, is a partition of $V$ if $A_i \cap A_j = \emptyset$ for all distinct $i, j \in \{1, \ldots, k\}$ and $V = \bigcup_{i = 1}^k A_i$. Note that we allow subsets $A_i$ to be empty. Finally, for an integer $k \in \mathbb{N}$, we use the standard notation $[k] := \{1, \ldots, k\}$.

We use the standard asymptotic notation $O$, $o$, $\Omega$ and $\omega$. Furthermore, given two functions $a$ and $b$, we write $a \ll b$ if $a = o(b)$ and $a \gg b$ if $a = \omega(b)$.

\section{Tools and preliminaries}

In this section, we present some tools to be used in the proof of our
main result.

\subsection{Probabilistic tools}

We will use lower tail estimates for random
variables which count the number of copies of certain graphs in a
random graph. The following version of
Janson's inequality, tailored for graphs, will suffice. This
statement follows immediately from Theorems $8.1.1$ and $8.1.2$ in
\cite{alon2004probabilistic}.

\begin{theorem}[Janson's inequality] \label{thm:Janson}
Let $p \in (0, 1)$ and consider a family $\{ H_i \}_{i \in
\mathcal{I}}$ of subgraphs of the complete graph on the vertex set
$[n]$. Let $G \sim \Gnp$. For each $i \in \mathcal{I}$, let $X_i$
denote the indicator random variable for the event that $H_i \subseteq
G$ and, for each ordered pair $(i, j) \in \mathcal{I} \times
\mathcal{I}$ with $i \neq j$, write $H_i \sim H_j$ if $E(H_i)\cap
E(H_j)\neq \emptyset$. Then, for
\begin{align*}
X &= \sum_{i \in \mathcal{I}} X_i, \\
\mu &= \mathbb{E}[X] = \sum_{i \in \mathcal{I}} p^{e(H_i)}, \\
\delta &= \sum_{\substack{(i, j) \in \mathcal{I} \times \mathcal{I} \\ H_i \sim H_j}} \mathbb{E}[X_i X_j] = \sum_{\substack{(i, j) \in \mathcal{I} \times \mathcal{I} \\ H_i \sim H_j}} p^{e(H_i) + e(H_j) - e(H_i \cap H_j)}
\end{align*}
and any $0 < \gamma < 1$,
$$ \Pr[X < (1 - \gamma)\mu] \le e^{- \frac{\gamma^2 \mu^2}{2(\mu + \delta)}}. $$
\end{theorem}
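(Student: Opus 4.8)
The plan is to prove the abstract version of Janson's inequality on a product probability space and then read off the graph statement by substitution. View $G \sim \Gnp$ as the product space $\{0,1\}^{E(K_n)}$ in which each coordinate equals $1$ independently with probability $p$, and for each $i \in \mathcal{I}$ identify $H_i$ with its edge set, so that $X_i = \mathbf{1}_{A_i}$ for the increasing event $A_i = \{H_i \subseteq G\}$. Then $\Pr[A_i] = p^{e(H_i)}$, so $\mu = \sum_i p^{e(H_i)}$, and whenever $H_i \sim H_j$ we have $\mathbb{E}[X_iX_j] = \Pr[A_i \cap A_j] = p^{e(H_i \cup H_j)} = p^{e(H_i) + e(H_j) - e(H_i \cap H_j)}$, which recovers $\delta$ exactly. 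So it suffices to prove that whenever $X = \sum_{i} X_i$ with each $X_i$ the indicator of an increasing event in a finite product space, $\Pr[X \le (1 - \gamma)\mu] \le \exp\big(-\gamma^2 \mu^2 / (2(\mu + \delta))\big)$ for $0 < \gamma < 1$.

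For this I would use the Laplace transform method: for $t > 0$ one has $\Pr[X \le (1-\gamma)\mu] \le e^{t(1-\gamma)\mu}\,\mathbb{E}[e^{-tX}]$ by Markov's inequality, so everything reduces to an upper bound on $\mathbb{E}[e^{-tX}]$, which I would obtain through a lower bound on $-\frac{d}{dt}\log \mathbb{E}[e^{-tX}] = \mathbb{E}[Xe^{-tX}]/\mathbb{E}[e^{-tX}]$. Fix an index $i$ and partition the remaining indices into those $j$ with $E(H_j) \cap E(H_i) \neq \emptyset$ and the rest, writing $Y_i = \sum_{j \sim i} X_j$ and $Z_i = \sum_{j:\, E(H_j) \cap E(H_i) = \emptyset} X_j$, so that $X = X_i + Y_i + Z_i$. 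Since $X_i \in \{0, 1\}$, $X_i e^{-tX} = e^{-t}\,X_i e^{-t(Y_i + Z_i)}$. Conditioning on the coordinates lying in $H_i$: $X_i$ becomes a constant, $Z_i$ is independent of this conditioning, and $e^{-tY_i}$ and $e^{-tZ_i}$ are both decreasing functions of the remaining (independent) coordinates, so Harris's inequality yields $\mathbb{E}[X_i e^{-t(Y_i + Z_i)}] \ge \mathbb{E}[X_i e^{-tY_i}]\,\mathbb{E}[e^{-tZ_i}]$. Using $Z_i \le X$ gives $\mathbb{E}[e^{-tZ_i}] \ge \mathbb{E}[e^{-tX}]$, while $e^{-x} \ge 1-x$ together with $\mathbb{E}[X_iY_i] = \sum_{j \sim i}\mathbb{E}[X_iX_j]$ gives $\mathbb{E}[X_i e^{-tY_i}] \ge \Pr[A_i] - t\sum_{j\sim i}\mathbb{E}[X_iX_j]$. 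Summing over $i$,
$$\frac{\mathbb{E}[Xe^{-tX}]}{\mathbb{E}[e^{-tX}]} \ge e^{-t}(\mu - t\delta) \ge \mu - t(\mu + \delta),$$
the last inequality being valid as long as $0 \le t \le 1$ and $\mu - t\delta \ge 0$.

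It then remains to integrate and optimize. Integrating the displayed differential inequality from $0$ to $t$ gives $\log \mathbb{E}[e^{-tX}] \le -\mu t + \tfrac{t^2}{2}(\mu + \delta)$, hence $\Pr[X \le (1-\gamma)\mu] \le \exp\big(-\gamma\mu t + \tfrac{t^2}{2}(\mu + \delta)\big)$; choosing $t = \gamma\mu/(\mu + \delta)$, which satisfies $t \le 1$ and $t\delta < \mu$ so that all the steps above are legitimate on $[0,t]$, produces the bound $\exp\big(-\gamma^2 \mu^2/(2(\mu + \delta))\big)$. Substituting the values of $\mu$ and $\delta$ from the first paragraph yields the theorem as stated.

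The step I expect to be delicate is the correlation estimate: one must condition on exactly the coordinates in $H_i$ so that $X_i$ is frozen, $Z_i$ decouples cleanly, and $e^{-tY_i}$, $e^{-tZ_i}$ are genuinely monotone in the free coordinates, and then apply Harris's inequality in the correct direction — applying FKG the wrong way here gives a useless bound. The remaining ingredients — the elementary inequality $e^{-x} \ge 1 - x$, the identity $\Pr[A_i]\,\mathbb{E}[Y_i \mid A_i] = \sum_{j \sim i}\mathbb{E}[X_iX_j]$, and checking that the optimal $t$ lies in the admissible range — are routine.
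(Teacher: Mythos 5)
Your proof is correct. The paper itself gives no proof of this statement --- it simply quotes it as a consequence of Theorems 8.1.1 and 8.1.2 of Alon--Spencer --- and your argument is precisely the standard Boppana--Spencer proof underlying those theorems: bound $\mathbb{E}[e^{-tX}]$ via the differential inequality for $-\tfrac{d}{dt}\log\mathbb{E}[e^{-tX}]$, using the decomposition $X = X_i + Y_i + Z_i$, Harris's inequality for the two decreasing functions $e^{-tY_i}$ and $e^{-tZ_i}$ after conditioning on the edges of $H_i$, and the monotone coupling $Z_i \le X$; then integrate and optimize $t = \gamma\mu/(\mu+\delta)$. All the delicate points are handled as you anticipate: the conditional measure given $A_i$ is still a product measure on the free coordinates (so FKG applies in the direction $\mathbb{E}[fg]\ge\mathbb{E}[f]\mathbb{E}[g]$ for two decreasing functions), the per-index lower bound $e^{-t}(\Pr[A_i]-t\sum_{j\sim i}\mathbb{E}[X_iX_j])\mathbb{E}[e^{-tX}]$ remains a valid lower bound even when its first factor is negative (the left side is nonnegative), and the optimal $t$ satisfies $t\le 1$ and $t\delta\le\mu$ so the linearization $e^{-t}(\mu - s\delta)\ge \mu - s(\mu+\delta)$ holds on all of $[0,t]$. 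Note also that $\delta$ here is a sum over ordered pairs, which is exactly the normalization your computation produces.
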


\subsection{Universality for ``nicely partitionable" graphs} \label{sec:FNP}

In the following definition, we introduce a family of graphs that
admit a ``nice partition".

\begin{definition} \label{def:partition}
Let $n, d$ and $t$ be positive integers and let $\eps$ be a
positive number. The family of graphs $\mathcal{F}(n, t,
\eps, d)$ consists of all graphs $H$ on $n$ vertices for
which there exists a partition $W_0, \ldots, W_t$ of $V(H)$ such that
\begin{enumerate}[(i)]
\item $|W_t| = \lfloor \eps n \rfloor$,
\item $W_0 = \Gamma(W_t)$,
\item $W_t$ is $3$-independent,
\item $W_i$ is $2$-independent for every $1\leq i \leq t-1$, and
\item for every $1\leq i\leq t$ and for every $w\in W_i$, $w$ has at
most $d$ neighbors in $W_0\cup\ldots \cup W_{i-1}$.
\end{enumerate}
\end{definition}

The following result, due to Ferber, Nenadov and Peter
\cite{FNP}, shows that for an appropriate $p$ a
typical $G \sim \Gnp$ is $\mathcal{F}(n, t, \eps, d)$-universal.

\begin{theorem}[Theorem 4.1 in \cite{FNP}] \label{thm:universality_partition}
Let $n$ and $t = t(n)$ be positive integers, let $d=d(n)\geq2$ be an integer and let $\eps = \eps(n) < 1 / (2d)$. Then the random graph $\Gnp$ is a.a.s.~$\mathcal{F}(n, t, \eps, d)$-universal, provided that $p \gg \eps^{-1} t n^{-1/d} \log^2 n$.
\end{theorem}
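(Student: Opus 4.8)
The plan is to embed a given $H \in \mathcal{F}(n,t,\eps,d)$ into $G \sim \Gnp$ round by round, processing the parts of a witnessing partition $W_0, W_1, \ldots, W_t$ in this order and fixing, in round $i$, the images of the vertices of $W_i$ given an already-constructed embedding of $W_0 \cup \cdots \cup W_{i-1}$. To make the rounds probabilistically independent, I would first write $G$ as an (essentially independent) union $G_0 \cup G_1 \cup \cdots \cup G_t$ of binomial random graphs, each with edge probability $p' \geq p/(t+1) \gg \eps^{-1} n^{-1/d}\log^2 n$, and only inspect edges of $G_i$ during round $i$; the factor $t$ in the hypothesis is precisely the price of this splitting. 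It then suffices to show that the embedding succeeds with probability at least $1 - e^{-\omega(dn\log n)}$, since $\mathcal{F}(n,t,\eps,d)$ contains at most $e^{O(dn\log n)}$ labelled graphs — each vertex outside $W_0$ selects at most $d$ back-neighbours, and the partition together with the bounded clusters inside $W_0$ (see below) contributes only lower-order terms — after which a union bound over $H$ completes the proof.

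In round $0$ we embed $W_0$. Condition (iii), combined with $W_0 = \Gamma(W_t)$, forces every vertex of $W_0$ to be adjacent to exactly one vertex of $W_t$ and to no vertex of $W_0$ outside its own cluster $\Gamma_H(w)$ ($w \in W_t$); hence $H[W_0]$ is a vertex-disjoint union of graphs on at most $d$ vertices, and $|W_0| \leq d\lfloor \eps n\rfloor < n/2$ since $\eps < 1/(2d)$. These clusters can be placed greedily and disjointly into $G_0$, because $G_0$ a.a.s.\ has the robust-containment property that for every set $T$ of at most $n/2$ vertices, $G_0 - T$ still contains a copy of every graph on at most $d$ vertices; this is a statement about $G_0$ alone, well above the relevant threshold for $p'$, and so it does not cost a union bound over $H$.

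For each round $1 \le i \le t$ we embed $W_i$ by a matching argument. Having fixed the images $\phi(v)$ of all $v \in W_0 \cup \cdots \cup W_{i-1}$, each $w \in W_i$ must be mapped into its candidate set $C_w = \bigcap_{v \in N^-(w)} \Gamma_{G_i}(\phi(v)) \cap U_i$, where $N^-(w) \subseteq W_0 \cup \cdots \cup W_{i-1}$ is the set of at most $d$ already-embedded neighbours of $w$ and $U_i$ is the set of unused vertices of $G$; embedding $W_i$ is exactly finding a matching of $W_i$ into $U_i$ in the ``candidacy graph'' carrying these neighbourhoods. The crucial structural point is that, by the $2$-independence of $W_i$ (respectively the $3$-independence of $W_t$), the sets $N^-(w)$, $w \in W_i$, are pairwise disjoint; hence, conditioning on $G_0, \ldots, G_{i-1}$, the events $\{u \in C_w\}$ over all pairs $(w,u) \in W_i \times U_i$ are generated by pairwise disjoint edge-sets of $G_i$ and are therefore mutually independent, each of probability $p'^{\,|N^-(w)|} \geq p'^{\,d}$. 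Thus the candidacy graph dominates a random bipartite graph with these edge probabilities; since its candidate sets have size $\gg \log n$ (this is what the $\eps^{-1}\log^2 n$ factor buys, given that $|U_i| \geq \eps n$ throughout) and since $|U_i| \geq |W_i| + \lfloor\eps n\rfloor$ for $i < t$, one verifies Hall's condition — the union of the candidate sets over any $S \subseteq W_i$ misses at most $|U_i|(1 - p'^{\,d})^{|S|}$ vertices in expectation — using Chernoff and Janson-type (Theorem~\ref{thm:Janson}) bounds with the requisite concentration.

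The delicate round is the last one, $i = t$, where $|U_t| = |W_t|$, so the matching must be \emph{perfect}: near the top of Hall's condition, where $S$ is almost all of $W_t$, there is no slack at all, and one must show, for instance, that every unused vertex lies in some $C_w$ and, more generally, control the expansion of the candidacy graph in this tight regime. This is exactly why condition (iii) asks for $3$-independence of $W_t$ rather than mere $2$-independence — it is what keeps the clusters of $W_0$, and hence the candidate sets $C_w$, behaving as genuinely independent objects. The remaining work is the book-keeping: arranging each of the $O(t)$ rounds to fail with probability $e^{-\omega(dn\log n)}$, which dictates the precise form of the hypothesis ($\eps^{-1}$ to offset the ground set shrinking to $\eps n$, the factor $t$ for the $(t+1)$-fold splitting of $p$, and $\log^2 n$ for the strength of the concentration bounds). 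I expect this final round, together with this book-keeping, to be the main obstacle.
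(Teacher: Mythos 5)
A preliminary remark: the paper does not prove this statement. Theorem \ref{thm:universality_partition} is imported verbatim from Ferber, Nenadov and Peter \cite{FNP} and used as a black box, so there is no internal proof to compare your sketch against; the following assesses it on its own terms.

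Your skeleton is the right one and broadly in the spirit of the argument in \cite{FNP}: sprinkling into $t+1$ rounds, the observation that $3$-independence of $W_t$ forces $H[W_0]$ to split into clusters of size at most $d$, the disjointness of the back-neighbourhoods $N^-(w)$ coming from $2$-independence, and the reduction of each round to a matching in a candidacy graph. However, the probabilistic bookkeeping contains a genuine gap that breaks the proof as proposed. You want to embed each fixed $H$ with probability $1 - e^{-\omega(dn\log n)}$ and then union-bound over the $e^{O(dn\log n)}$ members of the family. That target is unattainable, and in fact provably false: for any spanning $H \in \mathcal{F}(n,t,\eps,d)$ with minimum degree at least one, $\Pr[H \not\subseteq G] \ge \Pr[G \text{ has an isolated vertex}] \ge e^{-2np} = e^{-O(n^{1-1/d}\,\mathrm{polylog}\, n)}$, which is $e^{-o(n\log n)}$. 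More concretely for your procedure, the probability that a single vertex $w$ with $|N^-(w)| = d$ receives an empty candidate set is $(1 - p'^{\,d})^{|U_i|} = e^{-\Theta(|U_i|\, p'^{\,d})} = e^{-\Theta(\eps^{1-d}\log^{2d} n)}$, i.e.\ only quasi-polynomially small, so no union bound over whole graphs $H$ can be paid at this price. The correct accounting --- used in \cite{FNP} and mirrored in Lemmas \ref{lemma:cycles} and \ref{lemma:embedding_cycles} of the present paper --- is to establish a deterministic matching property of $G$ by union-bounding over \emph{local configurations}: for each $k$ and each family of $k$ pairwise disjoint $d$-sets (the putative images of the back-neighbourhoods of a $k$-subset $S$ of some $W_i$), the Hall/Aharoni--Haxell condition for $S$ fails with probability $e^{-\Omega(k |U| p'^{\,d})} = e^{-\omega(dk\log n)}$, which beats the $\binom{n}{d}^{k}$ choices precisely because both exponents scale linearly in $k$. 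Your sketch never makes the failure probability scale with $|S|$, and without that the argument does not close. Relatedly, you correctly identify the final round, where $|U_t| = |W_t|$ and a perfect matching is forced, as the crux, but you leave it unresolved; that is exactly where the Aharoni--Haxell machinery (Theorem \ref{thm:hyper_match}) and the $3$-independence of $W_t$ do their real work, so the hardest part of the theorem remains unproved in your proposal.
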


We remark that this result is actually stronger than we need, since it gives a statement about spanning graphs while in this paper we only deal with almost-spanning graphs. Thus, when applying Theorem~\ref{thm:universality_partition}, we will make our graph spanning by adding a certain number of isolated vertices.

\subsection{Universality for graphs of small size} \label{sec:components}




In this section, we prove auxiliary lemmas which will allow us to ignore small components in the proof of Theorem \ref{thm:main_universal} (see Phase III in the proof of Theorem \ref{thm:main_universal}).

\begin{lemma} \label{lemma:small_universal_single}
Let $\Delta \ge 3$ and $k$ be integers and let $H \in \mathcal{H}(\log^k n, \Delta)$ be a connected graph with $v(H) \ge \Delta + 2$. Then $G \sim \Gnp$  contains $H$ with probability $1 - e^{-\omega(n)}$, provided that $p \gg n^{-2/(\Delta + 1)}$.
\end{lemma}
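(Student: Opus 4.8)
The plan is to apply Janson's inequality (Theorem~\ref{thm:Janson}) to the family $\{H_i\}$ of all copies of $H$ in the complete graph on $[n]$, with $G \sim \Gnp$ and $X_i = \mathbf{1}[H_i \subseteq G]$. Write $v := v(H)$ and $e := e(H)$, so that $\Delta + 2 \le v \le \log^k n$ and, since $\Delta(H) \le \Delta$, also $e \le \Delta v / 2$. The crucial structural fact, and the reason for the hypotheses that $H$ is connected with $v(H) \ge \Delta + 2$, is that $H$ contains no copy of $K_{\Delta+1}$: every vertex of such a copy would already have degree $\Delta$ inside it, so the copy would form a connected component, contradicting $v(H) > \Delta + 1$. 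Setting $\mu = \mathbb{E}[X]$ and letting $\delta$ be as in Theorem~\ref{thm:Janson}, it suffices to prove $\mu = \omega(n)$ and $\mu^2 / \delta = \omega(n)$: then Janson's inequality (with $\gamma = 1/2$) gives $\Pr[X = 0] \le e^{-\mu^2/(8(\mu+\delta))}$, and $\mu^2/(\mu+\delta) \ge \tfrac12\min\{\mu, \mu^2/\delta\} = \omega(n)$.

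For the first moment, the number of copies of $H$ in $K_n$ is $(1 + o(1))\, n^v / |\mathrm{Aut}(H)| \ge n^v / (2\, v!)$ (using $v = o(\sqrt{n})$), so $\mu \ge n^v p^e / (2\, v!) \ge n^{v/(\Delta+1)} / (2\, v!)$ by the bounds $p \gg n^{-2/(\Delta+1)}$ and $e \le \Delta v / 2$. Taking logarithms, $\log \mu \ge \tfrac{v}{\Delta+1}\log n - v\log v - O(1)$; since $v \le \log^k n = o(n^{1/(\Delta+1)})$, the right-hand side is increasing in $v$ throughout the relevant range and hence minimised at $v = \Delta + 2$, where it equals $\bigl(1 + \tfrac{1}{\Delta+1}\bigr)\log n - O_\Delta(1)$. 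Therefore $\mu \ge n^{1 + 1/(\Delta+1)} / C_\Delta = \omega(n)$.

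The heart of the matter is the bound on $\delta$. A routine count gives
\[
  \delta \;\le\; \frac{n^{2v} p^{2e}}{|\mathrm{Aut}(H)|^2} \cdot S, \qquad S := \sum_{\substack{F \subseteq H \\ e(F) \ge 1}} \frac{N(F \to H)}{n^{v(F)} p^{e(F)}},
\]
where $N(F \to H) \le v^{v(F)}$ is the number of (subgraph) embeddings of $F$ into $H$; since the number of copies of $H$ in $K_n$ is also $(1 + o(1))\, n^v / |\mathrm{Aut}(H)|$, the automorphism factors cancel and $\mu^2 / \delta \ge (4S)^{-1}$, so it remains to show $S = o(1/n)$. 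The key estimate is that $n^{v(F)} p^{e(F)} \ge n^{1 + 1/(\Delta+1)}$ for every subgraph $F \subseteq H$ with at least one edge. This follows by distinguishing cases according to $v(F)$: for $v(F) \le \Delta$ one uses $e(F) \le \binom{v(F)}{2}$; for $v(F) = \Delta + 1$ one uses $e(F) \le \binom{\Delta+1}{2} - 1$, which is exactly where the absence of $K_{\Delta+1}$ enters; and for $v(F) \ge \Delta + 2$ one uses $e(F) \le \Delta v(F)/2$, which yields $n^{v(F)} p^{e(F)} \ge \bigl(n p^{\Delta/2}\bigr)^{v(F)} \ge n^{v(F)/(\Delta+1)}$. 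Now split $S = S_{\mathrm{small}} + S_{\mathrm{large}}$ according to whether $v(F) \le v_0$ or $v(F) > v_0$, for a suitably large constant $v_0 = v_0(\Delta)$. There are only $\mathrm{polylog}(n)$ subgraphs $F$ with $v(F) \le v_0$, each satisfying $N(F \to H) \le v^{v_0} = \mathrm{polylog}(n)$, so $S_{\mathrm{small}} \le \mathrm{polylog}(n) / n^{1 + 1/(\Delta+1)} = o(1/n)$. For $S_{\mathrm{large}}$, the bounded-degree condition ensures that any $f$ vertices of $H$ span at most $\Delta f / 2$ edges, so there are at most $(v \cdot 2^{\Delta/2})^f$ subgraphs $F$ with $v(F) = f$; combining this with $N(F \to H) \le v^f$ and $n^{v(F)} p^{e(F)} \ge n^{f/(\Delta+1)}$ gives $S_{\mathrm{large}} \le \sum_{f > v_0} \tau^f$ with $\tau := 2^{\Delta/2} v^2 n^{-1/(\Delta+1)} = o(1)$, which is $o(1/n)$ once $v_0 > \Delta$. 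Hence $S = o(1/n)$, so $\mu^2 / \delta = \omega(n)$, which together with $\mu = \omega(n)$ finishes the proof.

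I expect the principal obstacle to be the bookkeeping in the estimate for $\delta$: one must simultaneously control subgraphs $F$ of $H$ that are as dense as the degree bound permits (such as $K_{\Delta+1}$ minus an edge, or $\Delta$-regular subgraphs) and the fact that $F$ and $H$ may have any number of vertices up to $\log^k n$. The point that makes this work is that the combinatorial factors in the variance — the number of subgraphs and the number of their embeddings — are only polylogarithmic when $F$ is of bounded size and, when $F$ is large, are single-exponential in $v(F)$ and hence dominated by the factor $n^{v(F)/(\Delta+1)}$ coming from $p \gg n^{-2/(\Delta+1)}$; the latter crucially uses that within any set of vertices a bounded-degree graph spans only linearly many edges.
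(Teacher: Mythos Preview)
Your proof is correct and follows the same overall strategy as the paper: apply Janson's inequality, show $\mu = \omega(n)$, and show $\delta = o(\mu^2/n)$. The implementation differs in two minor respects. First, the paper restricts to \emph{canonical} copies of $H$ (placing $h_j$ into a designated set $V_j \subset [n]$ of size $n/\log^k n$), which sidesteps automorphism bookkeeping and makes the pair count for $\delta$ completely transparent; you instead work with all copies and carry the factor $|\mathrm{Aut}(H)|$, which cancels correctly but costs a little extra care. Second, the density input is organised differently: the paper observes that any \emph{proper} subgraph $J \subset H$ with $v(J) \ge \Delta+1$ has a vertex of degree at most $\Delta-1$ (by connectivity of $H$), giving the uniform bound $e(J) \le (v(J)\Delta - 1)/2$; you instead exploit that $H$ is $K_{\Delta+1}$-free to handle $v(F)=\Delta+1$, and for larger $F$ use only the trivial bound $e(F)\le \Delta v(F)/2$ together with the growth of $n^{v(F)/(\Delta+1)}$. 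Both routes yield $\delta = o(\mu^2/n)$, and neither is materially simpler than the other.
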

\begin{proof}

Let $(h_1, \ldots, h_{v(H)})$ be an arbitrary ordering of the vertices of $H$ and let $V_1, \ldots, V_{v(H)} \subseteq V(G)$ be disjoint subsets of order $n / \log^k n$. We wish to use Janson's inequality to prove the lemma. For that, we will restrict our attention to the ``canonical" copies of $H$: the family $\{H_i\}_{i \in \mathcal{I}}$ consists of all those copies of $H$ in $K_n$ with the property that the vertex $h_j$ belongs to the set $V_j$ for every $j \in \{1, \ldots, v(H)\}$. 
 We now estimate the parameters $\mu$ and $\delta$ defined in Theorem \ref{thm:Janson}.

Let $X$ be the number of copies $H_i$, $i \in \mathcal{I}$, that appear in $G$. Then $\mu = \mathbb{E}[X]$ satisfies the bound
$$ \mu = \left( \frac{n}{\log^k n} \right)^{v(H)} p^{e(H)} \ge \left( \frac{n}{\log^k n} \right)^{v(H)} p^{v(H) \Delta / 2} \gg \left( \frac{n}{\log^k n} \cdot n^{-\Delta/(\Delta+1)} \right)^{v(H)} \gg n, $$
where the last inequality follows from $v(H) \ge \Delta + 2$. 

Next, note that for any proper subgraph $J \subset H$ we have
\begin{itemize}
    \item $e(J) \le \binom{v(J)}{2}$ if $v(J) \le \Delta$, and
    \item $e(J) \le \frac{1}{2}((v(J) - 1)\Delta + \Delta - 1) = \frac{1}{2}(v(J) \Delta - 1)$ otherwise.
\end{itemize}
The second estimate follows from the fact that $H$ is connected and therefore there exists at least one vertex in $J$ with degree at most $\Delta - 1$. We can now rewrite $\delta$ as
$$ \delta = \sum_{\substack{(i, j) \in \mathcal{I} \times \mathcal{I} \\ H_i \sim H_j}} p^{e(H_i) + e(H_j) - e(H_i \cap H_j)} = \sum_{J \subset H} \sum_{H_i \cap H_j \cong J} p^{2e(H) - e(J)}.$$
Using the observations above, we split the sum based on the
size of $v(J)$, getting
 $$\delta \le \sum_{\substack{J \subset H\\v(J) \le \Delta}} \sum_{H_i \cap H_j \cong J} p^{2 e(H) - \binom{v(J)}{2}} + \sum_{\substack{J \subset H\\v(J) \ge \Delta + 1}} \sum_{H_i \cap H_j \cong J} p^{2 e(H) - (v(J)\Delta - 1)/2}. $$
We can bound the number of summands by first deciding on an embedding of $J$, which can be done in $(n / \log^k n)^{v(J)}$ ways, and then on an embedding of the remaining parts of the two copies of $H$ which intersect on $J$ (at most $(n / \log^k n)^{2(v(H) - v(J))}$ ways), yielding
$$ \delta \le \sum_{j = 2}^{\Delta} \binom{v(H)}{j} \left(\frac{n}{\log^k n}\right)^{2v(H) - j} p^{2e(H) - \binom{j}{2}} + \sum_{j = \Delta + 1}^{v(H) - 1} \binom{v(H)}{j} \left(\frac{n}{\log^k n}\right)^{2v(H) - j} p^{2e(H) - (j\Delta - 1)/2}.
$$
Finally, by pulling $\mu^2$ outside, we obtain
$$ \delta \le \mu^2 \left(  \sum_{j = 2}^{\Delta} \binom{v(H)}{j} \left( \frac{n}{\log^k n} \right)^{-j} p^{-\binom{j}{2}} + \sum_{j = \Delta + 1}^{v(H) - 1} \binom{v(H)}{j} \left( \frac{n}{\log^k n} \right)^{-j} p^{-(j\Delta - 1)/2} \right).$$
By substituting for $p$, we get the following upper bound on the first sum,
\begin{align*}
\delta_1 := \sum_{j = 2}^{\Delta} \binom{v(H)}{j} \left( \frac{n}{\log^k n} \right)^{-j} p^{-\binom{j}{2}} &\ll \sum_{j = 2}^{\Delta} (\log n)^{2kj} n^{-j} n^{\frac{j(j-1)}{\Delta + 1}} \le \sum_{j = 2}^\Delta (\log n)^{2\Delta k} n^{\frac{\Delta}{\Delta+1}(j-1) -j} \\
&= (\log n)^{2\Delta k} \sum_{j = 2}^{\Delta} n^{-\frac{\Delta + j}{\Delta + 1}} \le (\log n)^{2\Delta k} \Delta  n^{-1 - 1/(\Delta+1)} \ll 1/n.
\end{align*}
Proceeding similarly for the second sum, we have
$$ \delta_2 := \sum_{j = \Delta + 1}^{v(H) - 1} \binom{v(H)}{j} \left( \frac{n}{\log^k n} \right)^{-j} p^{-(j\Delta - 1)/2}  \ll \sum_{j = \Delta + 1}^{v(H) - 1} (\log n)^{2jk}  n^{-j} n^{\frac{j\Delta - 1}{\Delta +1}} = \sum_{j = \Delta + 1}^{v(H)-1} (\log n)^{2jk} n^{-\frac{j+1}{\Delta+1}}.$$
Since $j \ge \Delta + 1$ in the above sum, we have
$$ n^{-\frac{j+1}{\Delta+1}} = n^{-1 - \frac{j - \Delta}{\Delta + 1}} \ll (\log n)^{-2k(j+1)} n^{-1},$$
thus it easily follows that $\delta_2 = o(1/n)$. Summing up, we get $\delta = o(\mu^2/n)$.

Finally, by applying Theorem \ref{thm:Janson} with parameters $\mu$ and $\delta$, we obtain
$$ 
	\Pr[X < \mu/2] \le e^{-\mu^2/(8(\mu + \delta))}.
$$
Since $\mu \gg n$, this implies the conclusion of the lemma.
\end{proof}

The next lemma deals with graphs on at most $\Delta + 1$ vertices. Since we treat $\Delta$ as a constant, it is a standard application of Janson's inequality and we omit the proof.
\begin{lemma} \label{lemma:very_small}
Let $\Delta \ge 3$ be an integer and $H$ any graph on at most $\Delta + 1$ vertices. Then $G \sim \Gnp$ contains $H$ with probability $1 - e^{-\omega(n)}$, provided that $p \gg n^{-2/(\Delta + 1)}$.
\end{lemma}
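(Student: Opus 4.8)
The plan is to follow the proof of Lemma~\ref{lemma:small_universal_single} essentially verbatim via Janson's inequality, the point being that everything simplifies once $v(H) \le \Delta + 1$ is a constant. We may assume $e(H) \ge 1$, since otherwise $H$ is a set of at most $\Delta+1$ isolated vertices and the statement is trivial. I would fix an arbitrary ordering $(h_1, \ldots, h_{v(H)})$ of $V(H)$, choose disjoint sets $V_1, \ldots, V_{v(H)} \subseteq V(G)$ each of size $\lfloor n/v(H)\rfloor$, and let $\{H_i\}_{i\in\mathcal I}$ be the family of all copies of $H$ in $K_n$ with $h_j$ placed in $V_j$ for every $j$. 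Writing $X = \sum_{i\in\mathcal I} X_i$ with $X_i$ the indicator of $H_i\subseteq G$, we have $\mu = \mathbb E[X] = \Theta\big(n^{v(H)}\big)\,p^{e(H)}$ since $v(H) = O(1)$.

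To bound $\mu$ from below, set $m := v(H)$ and note that $e(H) \le \binom{m}{2}$ together with $m \le \Delta+1$ gives
\[
 m - \frac{2e(H)}{\Delta+1} \ \ge\ m - \frac{m(m-1)}{\Delta+1} \ =\ \frac{m(\Delta+2-m)}{\Delta+1} \ \ge\ 1,
\]
where the last inequality holds because the quadratic $x\mapsto x(\Delta+2-x)$ is at least $\Delta+1$ on $1\le x\le\Delta+1$. Since $p \gg n^{-2/(\Delta+1)}$ and $e(H)\ge 1$ we have $p^{e(H)} \gg n^{-2e(H)/(\Delta+1)}$, and hence $\mu \gg n^{\,m-2e(H)/(\Delta+1)} \ge n$, i.e.\ $\mu = \omega(n)$. (It is here that one genuinely uses the strict hypothesis: for $H = K_{\Delta+1}$ the exponent $m-2e(H)/(\Delta+1)$ equals exactly $1$.)

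For $\delta$ I would repeat the bookkeeping of Lemma~\ref{lemma:small_universal_single}:
\[
 \delta \ =\ \sum_{J}\ \#\{(i,j)\in\mathcal I\times\mathcal I : H_i\cap H_j\cong J\}\cdot p^{2e(H)-e(J)},
\]
where $J$ runs over the finitely many subgraphs of $H$ with $1\le e(J)\le e(H)-1$. The number of pairs with a given intersection type $J$ is $O\big(n^{2v(H)-v(J)}\big)$ — choose an embedding of $J$ and then extend independently to $H_i$ and to $H_j$ — so pulling out $\mu^2 = \Theta\big(n^{2v(H)}p^{2e(H)}\big)$ yields $\delta \le \mu^2\sum_J O\big(n^{-v(J)}p^{-e(J)}\big)$. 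To conclude $\delta = o(\mu^2/n)$ it suffices that $n^{v(J)-1}p^{e(J)}\to\infty$ for each such $J$; using $p^{e(J)}\gg n^{-2e(J)/(\Delta+1)}$ once more, this reduces to $v(J)-1-2e(J)/(\Delta+1)\ge 0$, i.e.\ $e(J) \le \tfrac{1}{2}(\Delta+1)(v(J)-1)$, which follows from $e(J)\le\binom{v(J)}{2}$ and $v(J)\le v(H)\le\Delta+1$.

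Finally, Theorem~\ref{thm:Janson} gives $\Pr[X<\mu/2] \le \exp\!\big(-\mu^2/(8(\mu+\delta))\big)$, and since $\mu=\omega(n)$ and $\delta=o(\mu^2/n)$ we get $\mu^2/(\mu+\delta)=\omega(n)$, whence $\Pr[H\not\subseteq G] \le \Pr[X=0] \le e^{-\omega(n)}$, as required. I do not anticipate any real obstacle here; the only point requiring care is that several of the relevant exponents sit exactly on the threshold (most notably for $H=K_{\Delta+1}$), so the argument must use $p\gg n^{-2/(\Delta+1)}$ rather than a constant-factor lower bound.
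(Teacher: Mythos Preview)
Your proposal is correct and follows exactly the approach the paper indicates: the paper omits the proof entirely, remarking only that since $\Delta$ is constant it is a standard application of Janson's inequality, which is precisely what you carry out. Your verification that $\mu=\omega(n)$ and $\delta=o(\mu^2/n)$ is sound; in particular the borderline case $J\cong K_{\Delta+1}$ cannot occur since $v(J)\le v(H)-1$, but your treatment via the strict hypothesis $p\gg n^{-2/(\Delta+1)}$ handles even the equality case and is perfectly adequate.
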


Finally, we make use of Lemmas~\ref{lemma:small_universal_single} and  \ref{lemma:very_small}  to show that every large induced subgraph of $\Gnp$ contains all connected graphs from $\mathcal{H}(\log^k n, \Delta)$ simultaneously.

\begin{lemma} \label{lemma:small_universal}
Let $\eps > 0$ be a constant and $\Delta \ge 3$ and $k$ be integers. Then, for $p \gg n^{-2/(\Delta+1)}$, $G \sim \Gnp$ a.a.s.~has the following property: for every $V' \subseteq V(G)$ of order $|V'| \ge \eps n$, $G[V']$ contains every connected graph $H \in \mathcal{H}(\log^k n, \Delta)$.
\end{lemma}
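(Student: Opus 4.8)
The plan is to combine Lemmas~\ref{lemma:small_universal_single} and~\ref{lemma:very_small} with a union bound over all large vertex subsets and all small connected graphs, exploiting the basic fact that for a fixed set $V' \subseteq V(G)$ the induced subgraph $G[V']$ is distributed exactly as $\mathcal{G}(|V'|, p)$.

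First I would reduce to sets of a fixed size. Since the event ``$G[V']$ contains $H$'' is monotone increasing in $V'$, it suffices to verify the property for every $V'$ with $|V'| = m := \lceil \eps n \rceil$, and there are at most $\binom{n}{m} \le 2^n$ such sets. Next, I would note that the number of connected graphs $H \in \mathcal{H}(\log^k n, \Delta)$ that must be considered is, up to isomorphism, at most $2^{O(\log^{2k} n)} = e^{o(n)}$, since it is bounded by the number of labelled graphs on at most $\log^k n$ vertices.

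Now fix such a pair $(V', H)$ with $|V'| = m$. As $\eps$ is a constant, $m = \Theta(n)$, so $p \gg n^{-2/(\Delta+1)}$ gives $p \gg m^{-2/(\Delta+1)}$, and since $\log m \ge \tfrac12 \log n$ for $n$ large we have $v(H) \le \log^k n \le \log^{k+1} m$. If $v(H) \le \Delta + 1$, then Lemma~\ref{lemma:very_small} applied to $G[V'] \sim \mathcal{G}(m, p)$ shows that $G[V']$ contains $H$ with probability $1 - e^{-\omega(m)}$; if $v(H) \ge \Delta + 2$, the same conclusion follows from Lemma~\ref{lemma:small_universal_single} applied to $G[V']$ with the polylogarithmic exponent $k+1$ in place of $k$. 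In either case the probability that $G[V']$ fails to contain $H$ is $e^{-\omega(m)} = e^{-\omega(n)}$.

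Finally I would take a union bound over the at most $2^n$ choices of $V'$ and the at most $e^{o(n)}$ choices of $H$: the total failure probability is at most $2^n \cdot e^{o(n)} \cdot e^{-\omega(n)}$, which tends to $0$ because $\omega(n)$ dominates $n \ln 2 + o(n)$. This is essentially all there is to it; the only points requiring care are that the hypotheses of the two earlier lemmas genuinely transfer to $\mathcal{G}(m, p)$ — the harmless loss $\log^k n \le \log^{k+1} m$ in the polylogarithmic exponent, and the fact that $p \gg m^{-2/(\Delta+1)}$ still holds since $\eps$ is constant — and that the super-exponentially small failure probability they provide outweighs the merely exponential number $2^n$ of candidate sets $V'$. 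There is no genuine obstacle beyond this bookkeeping.
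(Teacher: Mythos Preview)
Your proof is correct and follows essentially the same route as the paper: apply Lemmas~\ref{lemma:small_universal_single} and~\ref{lemma:very_small} to $G[V']\sim\mathcal{G}(m,p)$ with the polylogarithmic exponent increased by one, and then union bound over the at most $2^n$ subsets and the $e^{o(n)}$ connected graphs in $\mathcal{H}(\log^k n,\Delta)$. The only cosmetic difference is your explicit reduction to sets of the minimal size $m=\lceil\eps n\rceil$ via monotonicity, which the paper leaves implicit.
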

\begin{proof}
Let $G \sim \Gnp$, with $p$ as stated in the lemma. By Lemmas~\ref{lemma:small_universal_single} and \ref{lemma:very_small}, for a fixed subset $V' \subseteq V(G)$ of order $|V'| \ge \eps n$ and a connected graph $H \in \mathcal{H}(\log^{k+1} (\eps n), \Delta)$, we have
$$ \Pr[H \subseteq G[V']] = 1 - e^{-\omega(n)}.$$
Note that as $ \log^{k}n \leq \log^{k+1} (\eps n)$, these estimates
also apply for every connected graph $H \in \mathcal{H}(\log^{k} n,
\Delta)$. Since there are at most $2^n$ choices for $V'$ and at most
$$ \sum_{v_H = 2}^{\log^k n} \sum_{e_H = 1}^{\Delta v_H / 2} \binom{v_H^2}{e_H} \le \Delta \log^{2k} n \cdot \binom{\log^{2k} n}{\Delta \log^k n} = o(2^n)$$
connected graphs $H \in \mathcal{H}(\log^k n, \Delta)$, an application of the
union bound completes the proof.
\end{proof}

\subsection{Systems of disjoint representatives in hypergraphs} \label{sec:hyper_match}

The following lemma will allow us to place a set of short cycles in the proof of Theorem \ref{thm:main_universal} (see Phase II in the proof of Theorem \ref{thm:main_universal}). We make no effort to optimize the logarithmic factor in the bound on the edge probability $p$.

\begin{lemma} \label{lemma:embedding_cycles}
Let $\eps > 0$ be a constant, $\Delta \ge 3$, $3 \le g \leq 2 \log n$ and $t \le \varepsilon n / (32 \log^3 n)$ be integers and let $D \subseteq [n]$ be a subset of size $\varepsilon n / (4\log n)$. Then $G \sim \Gnp$ satisfies the following with probability at least $1 - o(1/n)$, provided that $p \gg \left(\log^7 n / n \right)^{1/(\Delta - 1)}$: for any family of subsets $\{W_{i, j} \}_{(i, j) \in [t] \times [g]}$, where
\begin{enumerate}[(i)]
\item $W_{i,j} \subseteq V(G) \setminus D$ and $|W_{i,j}| = \Delta - 2$ for all $(i, j) \in [t] \times [g]$, and
\item $W_{i, j} \cap W_{i', j'} = \emptyset$ for all $i \neq i'$,
\end{enumerate}
there exists a family of cycles $\{ C_i = (c_{i_1}, \ldots, c_{i_g}) \}_{i \in [t]}$, each of length $g$, such that
\begin{enumerate}[(i)]
\item $V(C_i) \subseteq G[D]$ and $V(C_i) \cap V(C_{i'}) = \emptyset$, for all $i \neq i'$, and
\item $W_{i, j} \subseteq \Gamma_G(c_{i_j})$ for all $(i, j) \in [t] \times [g]$.
\end{enumerate}
\end{lemma}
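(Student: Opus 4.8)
The plan is to pass, once and for all, to a high‑probability ``pseudorandom'' event for $G$ and then argue deterministically. For a set $W\subseteq V(G)\setminus D$ with $|W|\le\Delta-2$ write $N_D(W):=D\cap\bigcap_{w\in W}\Gamma_G(w)$. Using Chernoff's inequality and a union bound over the $n^{O(1)}$ relevant sets I would record that, with probability $1-o(1/n)$: (a) $|N_D(W)|=(1\pm o(1))|D|p^{|W|}$ for every $W$ with $|W|\le\Delta-2$ — this needs $|D|p^{\Delta-2}=\omega(\log n)$, which holds since $p\gg(\log^{7}n/n)^{1/(\Delta-1)}$ gives $|D|p^{\Delta-1}=\omega(\log^{6}n)$; (b) $|N_D(W)\cap N_D(W')|\le\rho:=\max\{C\log n,\,2|D|p^{2(\Delta-2)}\}$ for every \emph{pair} of disjoint $(\Delta-2)$‑sets $W,W'$, via an upper‑tail/Janson bound; (c) the \emph{spreading} estimate $\bigl|\bigcup_{i\in I}N_D(W_i)\bigr|\ge\tfrac14\min\{|D|,\,|D|p^{\Delta-2}|I|\}$ for every $I$ and every disjoint family $(W_i)_{i\in I}$ of $(\Delta-2)$‑sets; and (d) every vertex has $(1\pm o(1))|D|p$ neighbours in $D$. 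Item (c) is where the disjointness assumption on the $W_{i,j}$ is essential: for a fixed $v\in D$ the events $\{v\in N_D(W_i)\}_{i\in I}$ use pairwise disjoint edge‑sets, hence are independent with $\Pr[v\notin\bigcup_iN_D(W_i)]=(1-p^{\Delta-2})^{|I|}$, and the indicators are also independent over $v$, so Chernoff applies with error $e^{-\Omega(\min\{|D|,\,|D|p^{\Delta-2}|I|\})}$; the union bound over the $\le\binom{t}{|I|}n^{(\Delta-2)|I|}$ instances closes because for ``small'' $|I|$ the saving $e^{-\Omega(|D|p^{\Delta-2}|I|)}$ beats the $n^{O(|I|)}$ cost (as $|D|p^{\Delta-2}=\omega(\log n)$), while for ``large'' $|I|$ the saving $e^{-\Omega(|D|)}=e^{-\Omega(n/\log n)}$ beats the cost $e^{O(n/\log^{2}n)}$ (as $|I|\le t=O(n/\log^{3}n)$).

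Given such a $G$, it suffices to produce, for the (adversarial) family $\{W_{i,j}\}$, a \emph{system of disjoint representatives} for the hypergraphs $\mathcal C_1,\dots,\mathcal C_t$, where $\mathcal C_i$ is the set of all $g$‑cycles $(c_1,\dots,c_g)$ in $G[D]$ with $c_\ell\in N_D(W_{i,\ell})$ for every $\ell$: a choice of one cycle from each $\mathcal C_i$, pairwise vertex‑disjoint, is exactly the desired family. Here (a) and a second‑moment/Janson count show each $\mathcal C_i$ is nonempty and in fact $|\mathcal C_i|$ is concentrated around $\bigl(\prod_\ell|N_D(W_{i,\ell})|\bigr)p^{g}$, so $|\mathcal C_i|$ is large and essentially the same for all $i$; and the crucial point, again using disjointness, is that any $g$‑cycle from some $\mathcal C_{i'}$ with $i'\ne i$ lies inside $\bigcup_\ell N_D(W_{i',\ell})$, which by (b) meets each $N_D(W_{i,j})$ in at most $\rho\ll|D|p^{\Delta-2}$ vertices, so the vertices already committed to cycles of other colours destroy only a controlled fraction of $\mathcal C_i$. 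I would attempt to build the SDR by an iterative argument of this flavour — a greedy selection with a robustness buffer, or a local‑lemma‑style argument on independent random choices $\hat C_i\in\mathcal C_i$ with the events $\hat C_i\cap\hat C_{i'}\neq\emptyset$ — with the parameters $t\le\varepsilon n/(32\log^{3}n)$ and $g\le2\log n$ (so that $tg^{2}\le|D|/2$) and the deliberately generous $\log^{7}n$ in the hypothesis on $p$ providing the room such an argument requires; making this step actually go through is where I expect the real difficulty to lie.

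A structural warning dictates this route: one must \emph{not} try to build the cycles vertex‑by‑vertex and close each one up, because once the two cycle‑neighbours of a vertex $c_\ell$ and its external set $W_{i,\ell}$ are fixed, the admissible choices for $c_\ell$ in $D$ number $|D|p^{\Delta}=o(1)$ in expectation when $p$ is close to the threshold, so typically there are none — the flexibility of \emph{all} $g$ vertices of a cycle must be used at once, which is why $\mathcal C_i$ has to be treated as a collection of atomic objects rather than decomposed coordinate by coordinate. The main obstacle of the proof is, accordingly, the disjoint‑representatives step: one needs the pseudorandom estimates of the first paragraph to be simultaneously valid for \emph{all} admissible $\{W_{i,j}\}$ and sharp enough that the ``how much of $\mathcal C_i$ can be blocked by the cycles of other colours'' bookkeeping genuinely closes, and in particular one must handle the regime where $|I|$ is comparable to $t$ — so that individual co‑neighbourhoods $N_D(W_{i,j})$ are themselves smaller than $|I|$ — by playing the spreading bound (c) against the intersection bound (b). The $g$‑cycle structure itself enters only mildly, through the adjacency constraints at the two ends of the path one forms inside each cycle.
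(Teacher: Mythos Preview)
Your instinct that the cycles must be treated as atomic objects, and that the problem reduces to a system of disjoint representatives for the hypergraphs $\mathcal C_1,\ldots,\mathcal C_t$, is exactly right. But the proposal has a genuine gap precisely where you yourself flag it: the SDR step is not actually carried out. Neither of the mechanisms you suggest closes cleanly. A naive greedy argument (``pick a cycle from $\mathcal C_1$, remove its vertices, pick from $\mathcal C_2$, \ldots'') fails because knowing that each $N_D(W_{i,j})$ remains large after deletions does not by itself guarantee that a \emph{cycle} through these sets survives in $G[D']$; the edge structure of the cycle has to be re-argued after every deletion, and your vertex-level pseudorandom items (a)--(d) are not strong enough to do that deterministically. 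A local-lemma approach on random $\hat C_i\in\mathcal C_i$ runs into the problem that a single vertex of $D$ typically lies in a huge number of the $\mathcal C_i$, so the dependency degree is not small.

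The paper resolves this in two moves you are missing. First, it invokes the Aharoni--Haxell hypergraph Hall theorem: to get an SDR for the $g$-uniform hypergraphs $\mathcal C_1,\ldots,\mathcal C_t$ it suffices that for every $\mathcal I\subseteq[t]$ the union $\bigcup_{i\in\mathcal I}\mathcal C_i$ contains a matching of size larger than $g(|\mathcal I|-1)$. Second, rather than separating vertex-level pseudorandomness from cycle-counting, it applies Janson's inequality directly to the family of \emph{augmented cycles} $C\oplus i$ (a canonical $g$-cycle in $D$ together with all the edges to $W_{i,j}$), over all $i\in\mathcal I$ simultaneously. The resulting failure probability is $e^{-\omega(|\mathcal I|\log^3 n)}$, which beats the $2^{O(|\mathcal I|\log^3 n)}$ cost of the union bound over all choices of the $W_{i,j}$ and of the set $D'\subseteq D$ of unused vertices. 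This yields: for every such $\mathcal I$ and every $D'$ with $|D'|\ge|D|-g^2|\mathcal I|$, some $\mathcal C_i$ with $i\in\mathcal I$ still has a cycle inside $D'$. That statement is exactly what one needs to build the required matching greedily \emph{edge by edge inside the hypergraph} (not index by index), and then Aharoni--Haxell finishes. The point is that the union bound is taken over $\{W_{i,j}\}_{i\in\mathcal I}$ and $D'$ jointly, with $|\mathcal I|$ governing both the cost and the Janson saving; your attempt to fix global pseudorandom properties first and argue deterministically afterwards forfeits this coupling.
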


Lemma \ref{lemma:embedding_cycles} will follow as a corollary of Lemma \ref{lemma:cycles} and the following generalization of Hall's matching criterion due to Aharoni and Haxell~\cite{aharoni2000hall}.

\begin{theorem}[Corollary 1.2, \cite{aharoni2000hall}] \label{thm:hyper_match}
Let $g$ be a positive integer and $\mathcal{H} = \{H_1, \ldots, H_t\}$ a family of $g$-uniform hypergraphs on the same vertex set. If, for every $\mathcal{I} \subseteq [t]$, the hypergraph $\bigcup_{i \in \mathcal{I}} H_i$ contains a matching of size greater than $g(|\mathcal I| - 1)$, then there exists a function $f:[t] \rightarrow \bigcup_{i = 1}^t E(H_i)$ such that $f(i) \in E(H_i)$ and $f(i) \cap f(j) = \emptyset$ for $i \neq j$.
\end{theorem}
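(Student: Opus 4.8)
The plan is a topological proof: I would derive the existence of a system of disjoint representatives (SDR) from a topological version of Hall's theorem together with a connectivity estimate for matching complexes. First I set up the relevant complex. Let $W := \bigsqcup_{i \in [t]} E(H_i)$ be the disjoint union of the edge sets, so that each edge is tagged with the index of the hypergraph it comes from, and let $\mathcal{M}$ be the simplicial complex on vertex set $W$ whose faces are the \emph{rainbow matchings} --- sets of pairwise vertex-disjoint tagged edges. For $I \subseteq [t]$, let $\mathcal{M}_I$ be the induced subcomplex on $\bigsqcup_{i \in I} E(H_i)$, which is exactly the matching complex of $\bigcup_{i \in I} H_i$ (an edge lying in several $H_i$ yields parallel tagged copies, but no two of these are disjoint, so they behave as non-adjacent twins and affect neither the matching number nor the connectivity bounds below). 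In this language, the function $f$ we seek is precisely a face $\sigma \in \mathcal{M}$ that meets every side $E(H_i)$.

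The first ingredient is the topological Hall theorem: if $\mathcal{C}$ is a simplicial complex whose vertex set is partitioned into sides $S_1, \dots, S_m$, and for every nonempty $I \subseteq [m]$ the induced subcomplex $\mathcal{C}\big[\bigcup_{i \in I} S_i\big]$ is $(|I| - 2)$-connected, then $\mathcal{C}$ has a face meeting every side. (This is the ``soft'' part of the argument: the contrapositive produces, via a Sperner/KKM-type colouring argument, a retraction of a simplex onto its boundary sphere.) Applied with $\mathcal{C} = \mathcal{M}$ and $S_i = E(H_i)$, it reduces the theorem to proving that $\mathcal{M}_I$ is $(|I| - 2)$-connected for every nonempty $I \subseteq [t]$.

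The second, and crucial, ingredient is the bound that for every hypergraph $H$ of rank at most $g$ the matching complex $\mathcal{M}(H)$ is $(\lceil \nu(H)/g \rceil - 2)$-connected, where $\nu(H)$ is the maximum size of a matching in $H$; this is tight, e.g.\ for disjoint copies of $C_4$ when $g = 2$. Granting it, fix a nonempty $I \subseteq [t]$ and put $H := \bigcup_{i \in I} H_i$. The hypothesis gives $\nu(H) \geq g(|I| - 1) + 1$, hence $\nu(H)/g > |I| - 1$ and $\lceil \nu(H)/g \rceil \geq |I|$, so $\mathcal{M}_I = \mathcal{M}(H)$ is $(|I| - 2)$-connected --- exactly the hypothesis that topological Hall requires. (For $|I| = 1$ this just says $H_i$ has an edge, which is the $I = \{i\}$ case of the assumption.) Combining the two ingredients yields the desired rainbow face, i.e.\ the function $f$.

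The main obstacle is the connectivity estimate for $\mathcal{M}(H)$: matching complexes are not highly connected in general, so the matching-number hypothesis must be used carefully, and the fact that one loses at most one unit of connectivity per $g$ vertices removed is precisely the origin of the factor $g$ in the threshold $g(|I| - 1)$. I would prove it by induction on $e(H)$. If $H$ is itself a matching, $\mathcal{M}(H)$ is a simplex and hence contractible. Otherwise, choose an edge $e_0$ that is avoided by some maximum matching --- such an edge exists because two intersecting edges cannot both lie in a common matching --- and write $\mathcal{M}(H) = \mathcal{M}(H - e_0) \cup \operatorname{st}(e_0)$, glued along $\operatorname{lk}(e_0) = \mathcal{M}(H - V(e_0))$. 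Here $\nu(H - e_0) = \nu(H)$, while $\nu(H - V(e_0)) \geq \nu(H) - g$, so by the inductive hypothesis $\mathcal{M}(H - e_0)$ is $(\lceil \nu(H)/g \rceil - 2)$-connected and $\operatorname{lk}(e_0)$ is $(\lceil \nu(H)/g \rceil - 3)$-connected; since $\operatorname{st}(e_0)$ is contractible, a Mayer--Vietoris / van Kampen estimate for attaching a cone along $\operatorname{lk}(e_0)$ gives that $\mathcal{M}(H)$ is $(\lceil \nu(H)/g \rceil - 2)$-connected. The delicate points are the choice of $e_0$ (which is what keeps $\nu$ from dropping in the main branch of the recursion) and the verification that deleting the at most $g$ vertices of a single edge costs only one unit of connectivity. (There is also a purely combinatorial route, replacing the topology by an alternating-path / augmenting-structure induction in the spirit of Haxell's matchability condition, but of comparable intricacy.)
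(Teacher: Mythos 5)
This statement is not proved in the paper at all: it is quoted verbatim as Corollary~1.2 of Aharoni and Haxell and used as a black box, so there is no in-paper argument to compare against. Your topological proof is a correct route to the theorem, and indeed it is essentially the known proof in the Aharoni--Haxell/Meshulam tradition. The reduction is set up properly: a rainbow face of the tagged matching complex is exactly the desired system of disjoint representatives, the parallel-copy issue is a genuine non-issue (duplicated edges are twins whose star folds onto the original, so neither $\nu$ nor the connectivity changes), and the arithmetic linking the hypothesis to the connectivity requirement is right, since $\nu > g(|I|-1)$ gives $\lceil \nu/g\rceil \ge |I|$ and hence $(|I|-2)$-connectedness from your bound. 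The inductive connectivity estimate also checks out: an edge avoided by some maximum matching exists whenever $H$ is not itself a matching, deleting it preserves $\nu$, deleting its at most $g$ vertices drops $\nu$ by at most $g$ (hence $\lceil\nu/g\rceil$ by exactly one), and the cover $\mathcal{M}(H)=\mathcal{M}(H-e_0)\cup\overline{\mathrm{st}}(e_0)$ glued along $\mathrm{lk}(e_0)=\mathcal{M}(H-V(e_0))$ feeds correctly into the standard nerve/Mayer--Vietoris gluing bound $\eta(A\cup B)\ge\min\{\eta(A),\eta(A\cap B)+1\}$; your tightness example of disjoint $4$-cycles (whose matching complex is a join of $S^0$'s) is also correct. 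The only caveat is that the two ingredients you treat as ``soft'' --- the topological Hall theorem for partitioned complexes and the gluing lemma for connectivity --- are themselves substantial theorems of roughly the same depth as the statement being proved, so as written this is an accurate high-level outline rather than a self-contained proof; but every step you do spell out is sound, and nothing in the plan would fail when filled in.
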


The following lemma allows us to find the matchings required by Theorem \ref{thm:hyper_match} in a greedy way, i.e., edge by edge.

\begin{lemma} \label{lemma:cycles}
Let $\eps > 0$ be a constant, $\Delta \ge 3$, $3 \le g \leq 2 \log n$ and $k \le \varepsilon n / (32 \log^3 n)$ be integers and let $D \subseteq [n]$ be a subset of order $\varepsilon n / (4 \log n)$. Then $G \sim \Gnp$ satisfies the following with probability at least $1 - o(1/n^2)$, provided that $p \gg \left(\log^7 n / n \right)^{1/(\Delta - 1)}$: for any family of subsets $\{W_{i, j} \}_{(i, j) \in [k] \times [g]}$, where
\begin{enumerate}[(i)]
\item $W_{i,j} \subseteq V(G) \setminus D$ and $|W_{i,j}| = \Delta - 2$ for all $(i, j) \in [k] \times [g]$, and
\item $W_{i, j} \cap W_{i', j'} = \emptyset$ for all $i \neq i'$,
\end{enumerate}
 and any subset $D' \subseteq D$ of order $|D'| \geq |D| - g^2 k$, there exists $i \in \{1, \ldots, k\}$ and a cycle $C = (c_1, \ldots, c_g) \subseteq G[D']$ of length $g$ such that $W_{i, j} \subseteq \Gamma_G(c_j)$ for all $j \in \{1, \ldots, g\}$.
\end{lemma}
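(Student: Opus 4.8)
The plan is to find the desired cycle greedily, one vertex at a time, exploiting the fact that $G[D']$ is still a fairly dense random graph on $\Omega(n/\log n)$ vertices and that $p$ is above the threshold needed for every $(\Delta-2)$-set to have many common neighbours inside $D'$. Fix a family $\{W_{i,j}\}$ and a subset $D'\subseteq D$ as in the statement. For a vertex $v\in V(G)$ and a set $W$ of size $\Delta-2$, call $v$ a \emph{good host} for $W$ (relative to a current set $S\subseteq D'$ of already-used vertices) if $v\in D'\setminus S$ and $W\subseteq\Gamma_G(v)$. The first step is a union-bound statement about $G\sim\Gnp$ alone: a.a.s.\ (with room to spare, i.e.\ with failure probability $o(1/n^2)$), for \emph{every} set $W$ of $\Delta-2$ vertices outside $D$ and every subset $U\subseteq D$ with $|U|\ge |D|-g^2k - g \ge \varepsilon n/(8\log n)$, the number of good hosts for $W$ in $U$ is at least, say, $\tfrac12 |U| p^{\Delta-2}\gg \log^{5}n$ (using $|U|p^{\Delta-2}\gg n^{1/(\Delta-1)}\log^{5}n / \log n$, which is $\gg \log^4 n$); moreover, for every vertex $u\in D$ its degree into any such large $U$ is at least $\tfrac12|U|p\gg\log n$. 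Both facts follow from Chernoff bounds on a single binomial variable together with a union bound over at most $n^{\Delta-2}\cdot 2^n$ choices of $(W,U)$ — the $e^{-\omega(n)}$-type concentration swamps the $2^n$, exactly as in Lemma~\ref{lemma:small_universal_single}. Call a graph $G$ satisfying all of this \emph{typical}; we work on this event henceforth.

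Now fix some index $i\in[k]$ — any one will do, since the subsets $W_{i,j}$ for different $i$ are disjoint but we only need one value of $i$. We build the cycle $C=(c_1,\dots,c_g)$ inside $D'$ vertex by vertex. Having chosen $c_1,\dots,c_{\ell-1}$ (a path, all in $D'$, with $c_r$ a good host for $W_{i,r}$ and $c_{r}\in\Gamma_G(c_{r-1})$), we must pick $c_\ell\in D'$ that (a) avoids $\{c_1,\dots,c_{\ell-1}\}$, (b) is a good host for $W_{i,\ell}$, (c) is adjacent to $c_{\ell-1}$, and, at the final step $\ell=g$, also (d) is adjacent to $c_1$. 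For $\ell<g$: by typicality applied with $U=D'\setminus\{c_1,\dots,c_{\ell-1}\}$ (which has size $\ge|D'|-g\ge|D|-g^2k-g$), the set $N$ of good hosts for $W_{i,\ell}$ in $U$ has size $\gg\log^4 n$; and $c_{\ell-1}$ has at least $\tfrac12|N|p\gg\log^3 n\ge 1$ neighbours in $N$, giving a valid choice of $c_\ell$. For the closing step $\ell=g$ we need a good host adjacent to \emph{both} $c_{g-1}$ and $c_1$: again $N$ (the good hosts for $W_{i,g}$ in $D'\setminus\{c_1,\dots,c_{g-1}\}$) has size $\gg\log^4 n$, and by typicality each of $c_{g-1},c_1$ has at least $\tfrac12|N|p\gg\log^3 n$ neighbours in $N$, so by inclusion–exclusion they have a common neighbour in $N$ — in fact $\gg|N|p^2\gg 1$ of them. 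This produces the cycle, and since all $c_j\in D'$ and $W_{i,j}\subseteq\Gamma_G(c_j)$ by construction, the conclusion of the lemma holds.

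The only real subtlety is bookkeeping in the union bound: one must confirm that the relevant quantities $|U|p^{\Delta-2}$ and $|N|p$ are large enough — \emph{polylogarithmic} and tending to infinity — to beat the $2^n$ choices of $U$ via a Chernoff bound. Here is where the hypothesis $p\gg(\log^7 n/n)^{1/(\Delta-1)}$ is used exactly: it gives $np^{\Delta-1}\gg\log^7 n$, hence $|U|p^{\Delta-2}\ge \tfrac{\varepsilon n}{8\log n}\,p^{\Delta-2}=\tfrac{\varepsilon}{8}\cdot\tfrac{np^{\Delta-1}}{p\log n}$, and since $p\log n\to 0$ this is $\gg\log^6 n$, comfortably enough that $\exp(-\Omega(|U|p^{\Delta-2}))=e^{-\omega(\log n)}$ — wait, that is not enough to kill $2^n$. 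This is the point I expect to be the main obstacle, and it must be handled the way Lemma~\ref{lemma:small_universal_single} handles it: rather than bounding $|N|$ for \emph{all} $2^n$ subsets $U$ directly, one proves the common-neighbour statement only for $U=D'$ (a \emph{fixed} set once $D'$ is fixed) with the much stronger binomial concentration available when $|D'|p^{\Delta-1}\gg n^{1-o(1)}$ is \emph{linear} — but $|D'|$ is only $\Theta(n/\log n)$, so even this gives failure probability $e^{-\Omega(n/\mathrm{polylog})}$, which does beat $2^{O(n)}$ provided we only union-bound over the choices of $D'$ inside $D$, i.e.\ over $2^{|D|}=2^{O(n/\log n)}$ sets, and over the $\binom{n}{\Delta-2}$ sets $W$. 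Thus the correct formulation is: a.a.s.\ for every $D'\subseteq D$ and every $(\Delta-2)$-set $W$ disjoint from $D$, $W$ has at least $\tfrac12|D'|p^{\Delta-2}$ common neighbours in $D'$, and every $u\in D$ has at least $\tfrac12|D'|p$ neighbours in $D'$ — proved by Chernoff with a union bound over $2^{O(n/\log n)}\cdot n^{\Delta-2}$ events, each failing with probability $e^{-\Omega(n/\log n)}$ once we check $|D'|p^{\Delta-2},\,|D'|p\gg n/\log n\cdot(\text{something}\to\infty)$, which again follows from $np^{\Delta-1}\gg\log^7 n$. With this event in hand the greedy argument above goes through verbatim, since throughout the construction the ``available'' set $D'\setminus\{c_1,\dots,c_{\ell-1}\}$ differs from $D'$ by at most $g\le 2\log n$ vertices and hence still contains all but $O(\log n)$ of the guaranteed common neighbours.
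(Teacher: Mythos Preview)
Your greedy strategy has two genuine gaps that cannot be repaired without essentially reverting to the paper's Janson argument.

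\textbf{First, the closing step fails outright.} At step $\ell=g$ you need $c_g\in D'$ adjacent to all of $W_{i,g}\cup\{c_1,c_{g-1}\}$, a set of $\Delta$ vertices. The expected number of such vertices is $|D'|p^{\Delta}\le \tfrac{\varepsilon n}{4\log n}\,p^{\Delta}$. Since $np^{\Delta-1}$ is only required to be $\gg\log^7 n$, we get $|D'|p^{\Delta}\ll p\log^6 n=o(1)$ for every $\Delta\ge 3$ at the bottom of the allowed range of $p$. So for fixed $c_1,c_{g-1},W_{i,g}$ there is typically \emph{no} suitable $c_g$, and no Chernoff bound can help. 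Your inclusion--exclusion sketch (``$\gg |N|p^{2}\gg 1$'') is simply wrong here: $|N|p^{2}\approx |D'|p^{\Delta}=o(1)$. Relatedly, in the intermediate steps your ``typicality'' statement that $c_{\ell-1}$ has $\ge \tfrac12|N|p$ neighbours in $N$ does not follow from knowing degrees into $D'$; the set $N$ of good hosts is a \emph{random} subset of size $\Theta(|D'|p^{\Delta-2})$, and a union bound over all such $N$ is hopeless.

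\textbf{Second, even ignoring the closing step, you throw away the factor $k$.} You write ``fix some index $i\in[k]$ --- any one will do,'' but this is exactly the point where the argument collapses. The union bound over all families $\{W_{i,j}\}$ and all admissible $D'$ has $2^{\Theta(k\log^3 n)}$ terms (this is the count the paper uses), and $k$ can be as large as $\Theta(n/\log^3 n)$. Any common-neighbourhood/Chernoff statement for a \emph{single} index $i$ gives failure probability at best $e^{-\Omega(|D'|p^{\Delta-1})}=e^{-\omega(\log^6 n)}$, which is nowhere near $e^{-\omega(k\log^3 n)}$. The lemma is stated with $k$ indices precisely so that one can exploit all of them simultaneously: the paper applies Janson's inequality to the family $\mathcal C^{+}=\{C\oplus i: C\in\mathcal C,\ i\in[k]\}$ of ``decorated'' canonical cycles, obtaining $\mu\gg k\log^3 n$ and $\delta=o(\mu^2/(k\log^3 n))$, hence failure probability $e^{-\omega(k\log^3 n)}$, which does beat the union bound. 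The disjointness condition~(ii) on the $W_{i,j}$ is what keeps $\delta$ under control across different $i$; your approach never uses it.
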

\begin{proof}
Our aim is to show that for a subset $D' \subseteq D$ and a family $\{W_{i,j}\}_{(i,j)\in[k]\times [g]}$ satisfying properties $(i)$ and $(ii)$, the graph $G \sim \Gnp$ fails to satisfy the conclusion of the lemma with probability at most $e^{-\omega(k \log^3 n)}$. Since we can choose the family $\{W_{i,j}\}_{(i,j)\in[k]\times[g]}$ in at most $\binom{n}{\Delta-2}^{k g} \le 2^{(\Delta - 2) kg \log n} = 2^{o(k \log^3 n)}$ ways and $D'$ in at most $\binom{n}{g^2 k} \le 2^{g^2 k \log n} \le 2^{4 k \log^3 n}$ ways, the lemma follows by a simple application of the union bound. It remains to prove the desired bound on the probability of a failure.

We first introduce some notation. Given a cycle $C = (c_1, \ldots, c_g) \subseteq K_n$ of length $g$, we define the graph $C \oplus i$ by
\begin{align*}
V(C \oplus i) = V(C) \cup \bigcup_{j = 1}^g W_{i,j}, \quad \text{and} \quad
E(C \oplus i) = E(C) \cup \bigcup_{\substack{j \in [g]\\v \in W_{i,j}}} \{c_j, v\}.
\end{align*}
Furthermore, let $V_1, \ldots, V_g \subseteq D'$ be arbitrarily chosen disjoint subsets of order $\varepsilon n / (16 \log^2 n)$ (this is possible since $|D'| \ge \varepsilon n / (8 \log n)$), define the family of canonical cycles $\mathcal{C}$ as
$$ \mathcal{C} := \{ C = (c_1, \ldots, c_g) \mid C \; \text{is a cycle and} \; c_j \in V_j \; \text{for all} \; j \in [g] \}$$
and set
$$ \mathcal{C}^+ := \{ C \oplus i \mid C \in \mathcal{C} \; \text{and} \;  i \in [k]\}. $$
Observe that if $G$ contains any graph from $\mathcal{C}^+$, then $G$ contains the desired cycle. Using Janson's inequality, we upper bound the probability that this does not happen. In the remainder of the proof, we will estimate the parameters $\mu$ and $\delta$ defined in Theorem \ref{thm:Janson}.

Note that each graph $C^+ \in \mathcal{C}^+$ appears in $G$ with probability $p^{g + (\Delta - 2)g} = p^{(\Delta - 1)g}$. Therefore,
$$\mu = |\mathcal{C}^+|  p^{(\Delta-1)g} = k \left(\frac{\varepsilon n}{16 \log^2 n} \right)^g  p^{(\Delta-1)g} \gg k \log^3 n.$$
Next, we wish to show that
$\delta = o(\mu^2/k \log^3 n)$. By definition, we have
$$ \delta = \sum_{i, j \in [k]} \sum_{\substack{C', C'' \in \mathcal{C}\\C' \oplus i \sim C'' \oplus j}} p^{e(C' \oplus i) + e(C'' \oplus j) - e((C' \oplus i) \cap (C'' \oplus j))}. $$
We consider the cases $i \neq j$ and $i = j$ separately.

First, if $C' \oplus i \sim C'' \oplus j$ for $i \neq j$ and $C',
C'' \in \mathcal{C}$, then we have $(C' \oplus
i) \cap (C'' \oplus j) = C' \cap C''$. Let $J := C' \cap C''$ and
observe that $e(J) \ge 1$, as otherwise $C' \oplus i$ and $C'' \oplus
j$ would not have any edges in common. Let $\mathcal{J}_1$ be the
family consisting of all possible graphs of the form $C' \cap C''$,
$$ \mathcal{J}_1 := \{J = C' \cap C'' \mid C', C'' \in \mathcal{C} \; \text{and} \; e(J) \ge 1\}. $$
We can now estimate the contribution of such pairs to $\delta$ as follows:
\begin{align*}
\delta_1 &=  \sum_{i \neq j} \sum_{\substack{C', C'' \in \mathcal{C}\\e(C' \cap C'') \ge 1}}  p^{e(C' \oplus i) + e(C'' \oplus j) - e(C' \cap C'')} =  \sum_{i \neq j} \sum_{J \in \mathcal{J}_1} \sum_{\substack{C', C'' \in \mathcal{C}\\C' \cap C'' = J}}  p^{2 (\Delta - 1)g  - e(J)} \\
&\le k^2 \sum_{J \in \mathcal{J}_1} \left( \frac{\varepsilon n}{16 \log^2 n} \right)^{2(g - v(J))} p^{2(\Delta-1)g - e(J)} = \mu^2 \sum_{J \in \mathcal{J}_1} \left( \frac{\varepsilon n}{16 \log^2 n} \right)^{-2 v(J)} p^{-e(J)}.
\end{align*}
Since  $e(J) = 1$ for $v(J) = 2$ and $ e(J) \le v(J)$  otherwise, we can bound the last sum by
\begin{align*}
\sum_{J \in \mathcal{J}_1} \left( \frac{\varepsilon n}{16 \log^2 n} \right)^{-2 v(J)} p^{-e(J)} \le \sum_{\substack{J \in \mathcal{J}_1\\v(J) = 2}} \left( \frac{\varepsilon n}{16 \log^2 n} \right)^{-4} p^{-1} +  \sum_{\substack{J \in \mathcal{J}_1\\v(J) > 2}} \left( \frac{\varepsilon n}{16 \log^2 n} \right)^{-2 v(J)} p^{-v(J)}.
\end{align*}
Observe that there are at most $\binom{g}{v_J} \left( \eps n / (16 \log^2 n) \right)^{v_J}$ graphs $J \in \mathcal{J}_1$ on $v_J$ vertices. Thus, we have
\begin{multline*}
\sum_{J \in \mathcal{J}_1} \left( \frac{\varepsilon n}{16 \log^2 n} \right)^{-2 v(J)} p^{-e(J)} \le \binom{g}{2} \left( \frac{\varepsilon n}{16 \log^2 n} \right)^{-2} p^{-1} + \sum_{v_J > 2} \binom{g}{v_J} \left(\frac{\eps n}{16 \log^2 n} \right)^{-v_J} p^{-v_J} \\
 \ll (2\log n)^2 \left( \frac{16 \log^2 n}{\eps n} \right)^2 n^{1/(\Delta-1)} + \sum_{v_J > 2} \binom{2 \log n}{v_J} \left(\frac{\eps n}{16 \log^2 n} \cdot \frac{1}{n^{1/(\Delta-1)}} \right)^{-v_J} \ll \frac{32}{\varepsilon n} \le \frac{1}{k \log^3 n},
\end{multline*}
where we used that $\Delta \geq 3$. Therefore, we obtain $\delta_1 = o(\mu^2 / k \log^3 n)$.

Let us now consider the case $C' \oplus i \sim C'' \oplus i$, for some $i \in [k]$ and distinct cycles $C', C'' \in \mathcal{C}$. Let $J := C' \cap C''$ and observe that $v(J) \ge 1$ and $v(J) \le g - 1$. As before, let $\mathcal{J}_2$ be the family consisting of all possible graphs of the form $C' \cap C''$,
$$ \mathcal{J}_2 := \{J = C' \cap C'' \mid C', C'' \in \mathcal{C} \; \text{and} \; v(J) \in \{1, \ldots, g - 1\}\}. $$
Note that if $V(J) \cap V_q = \{v\}$ for some $q \in \{1, \ldots, g\}$, then $\{v, w\} \in E(C' \oplus i \cap C'' \oplus i)$ for all $w \in W_{i, q}$. Therefore, we have
$$
	e(C' \oplus i \cap C'' \oplus i) = e(J) + v(J)(\Delta - 2).
$$
With these observations in hand, we can bound the contribution of such pairs to $\delta$ as follows:
\begin{multline*}
\delta_2   = \sum_{i \in [k]} \sum_{\substack{C', C'' \in \mathcal{C}\\v(C' \cap C'') \ge 1}}  p^{e(C' \oplus i) + e(C'' \oplus i) - e(C' \oplus i \cap C'' \oplus i)}
 = \sum_{i \in [k]} \sum_{J \in \mathcal{J}_2} \sum_{\substack{C', C'' \in \mathcal{C}\\C' \cap C'' = J}} p^{2(\Delta-1)g - (e(J) + v(J)(\Delta - 2))} \\
\le k \sum_{J \in \mathcal{J}_2} \left(\frac{\varepsilon n}{16 \log^2 n} \right)^{2(g - v(J))}  p^{2(\Delta-1)g - (e(J) + v(J)(\Delta-2))}
 \le \frac{\mu^2}{k} \sum_{J \in \mathcal{J}_2}  \left(\frac{\varepsilon n}{16 \log^2 n} \right)^{-2 v(J)}  p^{-(e(J) + v(J)(\Delta - 2))}.
\end{multline*}
Since $J$ is a subgraph of a cycle, we have $e(J) \leq v(J)$. Therefore, we can bound the last sum by
$$
\sum_{J \in \mathcal{J}_2}  \left(\frac{\varepsilon n}{16 \log^2 n} \right)^{-2 v(J)}  p^{-(e(J) + v(J)(\Delta - 2))} \le \sum_{J \in \mathcal{J}_2} \left(\frac{\varepsilon n}{16 \log^2 n} \right)^{-2 v(J)}  p^{-v(J)(\Delta - 1)}.
$$
Note that there are at most $\binom{g}{v_J} \left( \eps n / (16 \log^2 n) \right)^{v_J}$ graphs $J \in \mathcal{J}_2$ on $v_J$ vertices. Thus, we have
\begin{align*}
\sum_{J \in \mathcal{J}_2}  \left(\frac{\varepsilon n}{16 \log^2 n} \right)^{-2 v(J)}  p^{-v(J)(\Delta - 1)} &\le \sum_{v_J = 1}^{g-1} \binom{g}{v_J} \left( \frac{\eps n}{16 \log^2 n} \right)^{-v_J} p^{-v(J)(\Delta - 1)} \\
&\ll \sum_{v_J = 1}^{g-1} \left( 2 \log n \cdot \frac{16 \log^2 n}{\eps n} \cdot \frac{n}{\log^7 n} \right)^{v_J} \le \frac{1}{\log^3 n}.
\end{align*}
Therefore, we obtain $\delta_2 = o(\mu^2 / (k \log^3 n))$.

Finally, we have $\delta \le \delta_1 + \delta_2 = o(\mu^2/(k \log^3 n))$ and Theorem \ref{thm:Janson} gives the desired upper bound on the probability that $G$ does not contain any graph from $\mathcal{C}^+$, completing the proof of the lemma.
\end{proof}

\begin{proof}[Proof of Lemma \ref{lemma:embedding_cycles}]
Let $G \sim \Gnp$ with $p$ as stated in the lemma. For each $i \in [t]$, we define a $g$-uniform hypergraph $H_i := (D, E_i)$ as follows: a set of vertices $\{v_1, \ldots, v_g\} \subseteq D$ forms a hyperedge if and only if $G[\{v_1, \ldots, v_g\}]$ contains a cycle $C = (c_{1}, \ldots, c_{g})$ such that $W_{i, j} \subseteq \Gamma_G(c_{j})$ for all $j \in [g]$. Observe that the existence of a function $f$ with properties as in Theorem \ref{thm:hyper_match}, applied to the family  $\mathcal H := \{ H_1, \ldots, H_t\}$, implies the existence of the desired family of cycles. Therefore, it is sufficient to prove that, with probability at least $1 - o(1 / n)$, for every $\mathcal I \subseteq [t]$ the hypergraph $\bigcup_{i \in \mathcal{I}} E(H_i)$ contains a matching of size at least $g |\mathcal I |$.

Since the requirements on the family $\{W_{i,j}\}_{(i,j)\in[t]\times[g]}$ are the same as in Lemma \ref{lemma:cycles}, it follows from the union bound that, with probability at least $1 - o(1/n)$, $G$ satisfies the property given by Lemma \ref{lemma:cycles} for $D$, $g$ and every $k \in [t]$. Let $\mathcal I \subseteq [t]$ be a subset of size $k$ for some $k \in [t]$ and set $D' := D$. We now apply the following procedure $k g$ times: using the property given by Lemma \ref{lemma:cycles} for $D'$, there exists a hyperedge $e \in \bigcup_{i \in \mathcal{I}} E(H_i)$ such that $e \subset D'$, and set $D' := D' \setminus e$. Since $|e| = g$ and we repeat the procedure $kg$ times, we have $|D'| \ge |D| - kg^2$ throughout the whole process. Thus, we can indeed apply Lemma \ref{lemma:cycles} in each step. Furthermore, since every edge is vertex disjoint from the previously obtained edges, we have constructed a matching in $\bigcup_{i \in \mathcal{I}} E(H_i)$ of size $k g$. As $\mathcal{I} \subseteq [t]$ was chosen arbitrarily, this concludes the proof of the lemma.
\end{proof}

\section{Proof of Theorem \ref{thm:main_universal}}

Our proof strategy goes as follows. Given a graph $H \in
\mathcal{H}((1 - \eps)n, \Delta)$, we first remove vertices belonging to small
connected components from $H$, writing $H_1$ for the resulting
graph. Working in $H_1$, we then remove carefully chosen induced
cycles of length at most $2 \log n$ (again, we remove vertices) in such a way that the
resulting graph $H_2$ belongs to the family of graphs
$\mathcal{F}((1 - \eps')n, \Theta(\log^3 n), \eps'', \Delta - 1)$,
for some parameters $\eps'$ and $\eps''$ tending to zero with $\eps$. Now, using Theorem
\ref{thm:universality_partition}, we find an embedding of $H_2$.
Then, using Lemma \ref{lemma:embedding_cycles}, we place the
removed cycles into $G$ in an appropriate way. Finally, using Lemma \ref{lemma:small_universal}, we
complete the embedding of $H$ by embedding small components one by
one. We will now give a formal description of this procedure.

\vspace{3mm}
\noindent
\textbf{Preparing the graph $G$.} Fix some $\eps > 0$ and integer $\Delta \ge 3$. Let $R, D_3, \ldots, D_{2\log n} \subseteq [n]$ be arbitrarily chosen disjoint subsets of $\{1, \ldots, n\}$ such that $|R| = (1 - \eps/2)n$ and $|D_i| = \eps n/ (4 \log n)$ for each $i \in \{3, \ldots, 2 \log n\}$.
Let $G$ be a graph with the following properties:
\begin{enumerate}[(i)]
\item the induced subgraph $G[R]$ is $\mathcal{F}((1 - \eps/2)n, (\Delta^2 +1)q + 1, \eps', \Delta - 1)$-universal, where $q = 65 \eps^{-1} \log^3 n $ and $\eps' = \min\{1/(2\Delta), \eps/(2 - \eps)\}$,

\item for every subset $V' \subseteq V(G)$ of order $|V'| \ge \eps n$, the induced subgraph $G[V']$
contains every connected graph from the family $\mathcal{H}(\log^4 n, \Delta)$, and

\item $G$  satisfies the property given by Lemma \ref{lemma:embedding_cycles} for every $g \in \{3, \ldots, 2 \log n\}$, $t \le \varepsilon n / (32 \log^3 n)$ and $D = D_g$.
\end{enumerate}
Observe that by Theorem \ref{thm:universality_partition} and Lemmas \ref{lemma:small_universal} and \ref{lemma:embedding_cycles}, $G \sim \Gnp$ satisfies properties (i)--(iii) asymptotically almost surely, provided that $p \gg n^{-1/(\Delta - 1)} \log^5 n$. We remark that the bound on $p$ here is determined by Theorem \ref{thm:universality_partition}.

\vspace{3mm}
\noindent
\textbf{Preparing the graph $H$.} Let $H \in \mathcal{H}((1 - \eps)n, \Delta)$ and let $H_1 \subseteq H$ be the subgraph which consists of all connected components of $H$ with at least $\log^4 n$ vertices. The following observation plays a crucial role in our argument.

\begin{claim} \label{claim:crucial}
For every vertex $v \in H_1$, at least one of the following properties hold:
\begin{enumerate}[$(a)$]
\item $B_{H_1}^{(\log n)}(v)$ contains a vertex $w$ with $\deg_{H_1}(w) \le \Delta - 1$, or
\item $H_1[B_{H_1}^{(\log n)}(v)]$ contains a cycle of length at most $2 \log n$.
\end{enumerate}
\end{claim}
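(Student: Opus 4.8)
The plan is to argue by contradiction. Suppose that for some $v \in H_1$ both $(a)$ and $(b)$ fail. Since $H_1$ has maximum degree at most $\Delta$, the failure of $(a)$ means that \emph{every} vertex of $B_{H_1}^{(\log n)}(v)$ has degree exactly $\Delta$ in $H_1$, while the failure of $(b)$ means that $H_1[B_{H_1}^{(\log n)}(v)]$ contains no cycle of length at most $2\log n$. I will show that these two facts together force $\big|B_{H_1}^{(\log n)}(v)\big| > n$, which is impossible since $B_{H_1}^{(\log n)}(v) \subseteq V(H_1) \subseteq V(H)$ and $v(H) \le (1-\eps)n < n$.

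The heart of the matter is that the absence of short cycles makes the breadth-first search from $v$ look like a tree down to depth $\log n$. Write $\Gamma^{(i)} := \Gamma_{H_1}^{(i)}(v)$. The claim to establish is that for every $1 \le i \le \log n - 1$ no edge of $H_1$ has both endpoints in $\Gamma^{(i)}$, and that for every $1 \le i \le \log n$ no vertex of $\Gamma^{(i)}$ has two neighbours in $\Gamma^{(i-1)}$. Indeed, given such a ``level edge'' (respectively, such a vertex with two distinct parents), one takes shortest paths from $v$ to the two relevant vertices, truncates them at their last common vertex $z$, and splices them with the offending edge(s) to form a cycle. One then checks that all its vertices lie within distance $i \le \log n$ of $v$ — hence inside $B_{H_1}^{(\log n)}(v)$ — and that its length is $2(i-d(v,z))+1 \le 2\log n$ in the level-edge case (this is where $i \le \log n - 1$ is used) or $2(i-d(v,z)) \le 2\log n$ in the two-parents case, contradicting the failure of $(b)$. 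Verifying that the spliced walk is a genuine, non-degenerate cycle and tracking the $\pm 1$ in its length is the only slightly delicate point, and I expect this bookkeeping to be the main obstacle; it is where one uses that the two truncated shortest paths are internally disjoint.

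Granted this tree-like structure, the level sizes follow at once. Since $v$ has degree $\Delta$ we have $|\Gamma^{(1)}| = \Delta$, and for each $1 \le i \le \log n - 1$ every $u \in \Gamma^{(i)}$ has exactly one neighbour in $\Gamma^{(i-1)}$ (at least one since $i \ge 1$, at most one by the claim), no neighbour inside $\Gamma^{(i)}$ (again by the claim), and hence exactly $\Delta - 1$ neighbours in $\Gamma^{(i+1)}$ (using $\deg_{H_1}(u) = \Delta$); moreover distinct vertices of $\Gamma^{(i)}$ have disjoint neighbourhoods in $\Gamma^{(i+1)}$, as no vertex has two parents. Therefore $|\Gamma^{(i+1)}| = (\Delta-1)|\Gamma^{(i)}|$ for $1 \le i \le \log n - 1$, so $|\Gamma^{(\log n)}| = \Delta(\Delta-1)^{\log n - 1}$.

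Finally, reading $\log$ as $\log_2$,
\[ \big|B_{H_1}^{(\log n)}(v)\big| \;\ge\; |\Gamma^{(\log n)}| \;=\; \Delta(\Delta-1)^{\log n - 1} \;\ge\; 3\cdot 2^{\log n - 1} \;=\; \tfrac{3}{2}\, n \;>\; n, \]
which is the desired contradiction. (If one wishes to be careful about the floor in $\log n$, summing $\sum_{i \le \log n} |\Gamma^{(i)}|$ instead of using the last level alone yields the same bound.) It is precisely at this last step that the hypothesis $\Delta \ge 3$ is consumed: it is what makes $(\Delta-1)^{\log_2 n}$ of order at least $n$, and indeed the claim genuinely fails for $\Delta = 2$ (take $v$ in the middle of a long path component).
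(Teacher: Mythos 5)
Your proof is correct and follows essentially the same route as the paper's: assume both properties fail, observe that the ball $B_{H_1}^{(\log n)}(v)$ must then be tree-like with every vertex of degree $\Delta$, and derive the contradiction $|B_{H_1}^{(\log n)}(v)| > n$ from the growth factor $\Delta-1 \ge 2$. The paper states this in one line ("the ball is a tree, hence has at least $\sum_{j=1}^{\log n}(\Delta-1)^j > n$ vertices"), whereas you carry out the level-by-level BFS bookkeeping explicitly — a harmless, and if anything slightly more careful, elaboration of the same argument.
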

\begin{proof}
Let us assume the opposite, i.e., for every vertex $w \in B_{H_1}^{(\log n)}(v)$ we have $\deg_{H_1}(w) = \Delta$ and $H_1[B_{H_1}^{(\log n)}(v)]$ contains no cycle of length at most $2\log n$. Then $H_1[B_{H_1}^{(\log n)}(v)]$ is a tree and, since $\Delta \ge 3$, it contains at least $\sum_{j = 1}^{\log n} (\Delta-1)^{j} > n$ vertices, which is clearly a contradiction.
\end{proof}

Let $I \subseteq V(H_1)$ be a maximal $(64 \eps^{-1} \log^3 n )$-independent set in $H_1$. Write $I_a$ for the set of all vertices in $I$ which satisfy property $(a)$ of Claim \ref{claim:crucial} and set $I_b := I \setminus I_a$. Furthermore, for each $v \in I_b$, let $C_v$ be a cycle of smallest length in $H_1[B_{H_1}^{(\log n)}(v)]$, let $\ell_v$ denote its length and fix an arbitrary ordering $(c_v^1, \ldots, c_v^{\ell_v})$ of the vertices along $C_v$. By minimality, $C_v$ is an induced cycle. Finally, let $H_2  := H_1 \setminus \left[ \bigcup_{v \in I_b} V(C_v) \right]$ and note that the $(64\eps^{-1} \log^3n)$-independence of $I$ implies
\begin{equation} \label{eq:removed_cycle}
B_{H_1}^{(3 \log n)}(v) \cap V(H_2) = \begin{cases}
    B_{H_1}^{(3 \log n)}(v), &\text{for $v \in I_a$},\\
    B_{H_1}^{(3 \log n)}(v) \setminus V(C_v), &\text{for $v \in I_b$}.
    \end{cases}
\end{equation}

\vspace{3mm}
\noindent
\textbf{Phase I: Embedding $H_2$ into $G[R]$.} We claim that there exists an embedding of $H_2$ into $G[R]$. Let $H_2'$ be a graph on $(1 - \eps/2)n$ vertices obtained from $H_2$ by adding isolated vertices. Using property (i) of the graph $G$, in order to show that there exists an embedding of $H_2$ into $G[R]$, it will suffice to prove that $H_2' \in \mathcal{F}((1 - \eps/2)n, (\Delta^2 +1) q + 1, \eps', \Delta - 1)$, where
$q = 65 \eps^{-1} \log^3 n  $ and $\eps' = \min\{1/2\Delta, \eps/(2 - \eps)\}$. We prove this by finding a partition
$W_0, W_1, \ldots, W_{(\Delta^2 + 1) q + 1}$ of $V(H_2')$ with the following properties:
\begin{enumerate}[(i)]
\item $|W_{(\Delta^2 + 1) q + 1}| = \lfloor \eps'(1 - \eps /2 )n \rfloor$,
\item $W_0 = \Gamma_{H_2'}(W_{(\Delta^2 + 1) q + 1})$,
\item $W_{(\Delta^2 + 1) q + 1}$ is $3$-independent (in $H_2'$),
\item $W_i$ is $2$-independent (in $H_2'$) for every $1\leq i \leq (\Delta^2 +1) q$, and
\item for every $1\leq i\leq (\Delta^2 +1) q + 1$ and for every $w\in W_i$, $w$ has at
most $\Delta - 1$ neighbors in $W_0\cup\ldots \cup W_{i-1}$.
\end{enumerate}

First, note that $H_2'$ contains at least $\eps n / 2$ isolated vertices as $|V(H_2)| \le (1- \eps)n$. Since $\eps' \le \eps/(2 - \eps)$ or equivalently $\eps' (1 - \eps / 2) \le \eps / 2$, we can set $W_{(\Delta^2 + 1)  q + 1}$ to be a set of $\eps' (1 - \eps / 2)  n$ isolated vertices. Then $W_0 = \emptyset$ and $W_{(\Delta^2 +1) q + 1}$ is trivially $3$-independent. Furthermore, let $S_{q} \subseteq V(H_2') \setminus W_{ (\Delta^2  + 1) q + 1}$ be the set of all remaining vertices in $H_2'$ with degree at most $\Delta - 1$ in $H_2$ and observe that for each $v \in I$ we have
\begin{equation}
\label{eq:sq_i}
S_q \cap B_{H_1}^{(3 \log n)}(v) \neq \emptyset.
\end{equation}
For $v \in I_a$, this follows from \eqref{eq:removed_cycle} and the definition of the set $I_a$. For $v \in I_b$, we have from \eqref{eq:removed_cycle} and $|B_{H_1}^{(3\log n)}(v)| \ge 3 \log n > |V(C_v)|$ that $B_{H_1}^{(3 \log n)}(v) \cap V(H_2) \neq \emptyset$. Thus there exists a vertex $w \in B_{H_1}^{(3 \log n)}(v) \cap V(H_2)$ adjacent to some vertex in $C_v$, and clearly $\deg_{H_2}(w) \le \Delta - 1$.

Next, for each $i \in \{1, \ldots, q-1\}$, we define
$$
S_{q-i} := \Gamma_{H_2}^{(i)}(S_q).
$$
We first show that $S_1,  \ldots, S_q, W_{(\Delta^2 +1)q +1}$ is a partition of $V(H_2')$. Since disjointness follows from the construction, it suffices to prove that for each $w \in V(H_2') \setminus  W_{(\Delta^2 + 1) q +1}$ we have
$B^{(q-1)}_{H_2}(w) \cap S_q \neq \emptyset$.
 This can be seen as follows. Since $I$ is a maximal $(64 \eps^{-1} \log^3 n)$-independent set in $H_1$, for each vertex $w \in V(H_2) \setminus I$ we have $B_{H_1}^{(64 \eps^{-1}\log^3 n)} (w) \cap I \neq \emptyset$. Otherwise, we could extend $I$, contradicting its maximality. Thus, from \eqref{eq:sq_i}, we conclude that for each  $w \in V(H_2) \setminus I$
we have $B_{H_1}^{(q - 1)} (w) \cap S_q \neq \emptyset$. Let us now consider the shortest path in $H_1$ from $w$ to a vertex $s \in S_q$, and denote the vertices along such a path by $w = p_0, p_1, p_2, \ldots, p_{q'} = s$, for some $q' \le q - 1$. If $p_1, \ldots, p_{q'} \in V(H_2)$, then clearly $s \in B_{H_2}^{(q-1)}(w)$. Otherwise, let $i'$ be the smallest index such that $p_{i'} \notin V(H_2)$. But then $\deg_{H_2}(p_{i'-1}) \le \Delta - 1$ and thus, by definition, $p_{i' - 1} \in S_q$, which again implies $B_{H_2}^{(q-1)}(w) \cap S_q \neq \emptyset$. This shows that $S_1, \ldots, S_q, W_{(\Delta^2 + 1)q +1}$ is indeed a partition of $V(H_2')$.

Furthermore, by construction, for each $i \in \{1, \ldots, q - 1\}$ and each vertex $v \in S_i$, $v$ has at least one neighbor in $\bigcup_{j = i+1}^{q} S_j$ and thus at most $\Delta - 1$ neighbors in $\bigcup_{j = 1}^{i-1} S_j$.
However, the sets $S_i$ are not necessarily $2$-independent in $H_2'$. This can be fixed in the following way. The square of $H_2'$, denoted by $(H_2')^2$, has maximum degree at most $\Delta^2$. Therefore, $(H_2')^2$
 can be partitioned into $\Delta^2 + 1$ sets $L_1, \ldots, L_{\Delta^2 + 1}$ which are independent in $(H_2')^2$ and thus $2$-independent in $H_2'$. Now, by setting $W_{(i - 1) (\Delta^2 +1) + j} := S_i \cap L_j$ for every $i \in \{1, \ldots, q\}$ and $j \in \{1, \ldots, \Delta^2 + 1\}$, we obtain a partition of $V(H_2')$ satisfying properties $(i)$--$(v)$.

To conclude, we have shown that $H_2' \in \mathcal{F}((1 - \eps/2)n, (\Delta^2 +1) q + 1, \eps', \Delta - 1)$. Thus, by property $(i)$ of the graph $G$, there exists an embedding $f:V(H_2) \rightarrow R$ of $H_2$ into $G[R]$.

\vspace{3mm}
\noindent
\textbf{Phase II: Embedding removed cycles.} Consider some $g \in \{3, \ldots, 2 \log n\}$ and let $I_g \subseteq I_b$ be the set of all vertices $v \in I_b$ such that $\ell_v = g$. For each $(v, j) \in I_g \times [g]$, let $W_{v,j} := f(\Gamma_H(c_v^j) \cap V(H_2))$. Note that, by construction, the family of subsets $\{W_{v,j}\}_{(v,j) \in I_g \times [g]}$ satisfies requirements $(i)$ and $(ii)$ of Lemma \ref{lemma:embedding_cycles} with $D = D_g$. Thus, in order to apply Lemma \ref{lemma:embedding_cycles}, it suffices to show that $I_g$ is not too large. The following claim provides an upper bound on $I$, and thus on $I_g$, which in this case suffices.

\begin{claim}
$|I| \le \frac{\varepsilon n}{32\log^3 n}.$
\end{claim}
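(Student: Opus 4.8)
The plan is to bound $|I|$ by a ball-packing argument inside $H_1$. Since $I$ is (in particular) a $(64\eps^{-1}\log^3 n)$-independent set in $H_1$, any two of its vertices are at distance at least $64\eps^{-1}\log^3 n + 1$ in $H_1$. Set $r := \lfloor 32\eps^{-1}\log^3 n\rfloor$. Then the balls $\{B_{H_1}^{(r)}(v)\}_{v\in I}$ are pairwise disjoint: if $w$ lay in both $B_{H_1}^{(r)}(u)$ and $B_{H_1}^{(r)}(v)$ for distinct $u,v\in I$, the triangle inequality would give $\mathrm{dist}_{H_1}(u,v)\le 2r\le 64\eps^{-1}\log^3 n$, contradicting $(64\eps^{-1}\log^3 n)$-independence.

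The remaining point is a lower bound on the size of each ball $B_{H_1}^{(r)}(v)$, and this is where the only care is needed: it uses the defining property of $H_1$ that every component of $H_1$ has at least $\log^4 n$ vertices. Since $\eps$ is a constant, $r+1\le\log^4 n$ for all large $n$, so the component $C$ of $H_1$ containing $v$ has at least $r+1$ vertices. In a connected graph on at least $r+1$ vertices, every ball of radius $r$ has at least $r+1$ vertices: a shortest path from $v$ to a farthest vertex (at distance $r'\le r$) already exhibits $r'+1$ vertices inside the ball, and if $r'<r$ then the ball is all of $C$, which has at least $r+1$ vertices. Hence $|B_{H_1}^{(r)}(v)|\ge r+1\ge 32\eps^{-1}\log^3 n$ for every $v\in I$.

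Putting these together, the disjoint balls $\{B_{H_1}^{(r)}(v)\}_{v\in I}$ are subsets of $V(H_1)\subseteq V(H)$, and $v(H)\le(1-\eps)n\le n$, so
$$ |I|\cdot 32\eps^{-1}\log^3 n \;\le\; \sum_{v\in I}\bigl|B_{H_1}^{(r)}(v)\bigr| \;\le\; |V(H_1)| \;\le\; n, $$
which rearranges to $|I|\le \varepsilon n/(32\log^3 n)$, as desired. (If $H_1$ is empty the statement is trivial.) I do not expect any genuine obstacle here — the estimate is routine — the only subtlety being that the lower bound on the ball sizes genuinely needs the minimum-component-size condition in the definition of $H_1$, since otherwise a radius-$r$ ball could consist of very few vertices.
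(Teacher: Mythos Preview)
Your proof is correct and follows essentially the same ball-packing argument as the paper: the balls of radius roughly $32\eps^{-1}\log^3 n$ around the vertices of $I$ are pairwise disjoint by $(64\eps^{-1}\log^3 n)$-independence, and each such ball has at least $32\eps^{-1}\log^3 n$ vertices thanks to the minimum-component-size condition on $H_1$. Your case analysis for the ball-size lower bound tacitly omits the (even easier) case where the eccentricity $r'$ of $v$ exceeds $r$, but then a shortest path of length $r$ already supplies $r+1$ vertices in the ball, so the argument goes through unchanged.
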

\begin{proof}
Let $u, v \in I$ be two distinct vertices from $I$. Then $B_{H_1}^{(32 \eps^{-1}\log^3 n)}(u) \cap
B_{H_1}^{(32 \eps^{-1} \log^3 n)}(v) = \emptyset$, as otherwise we would have
$\mathrm{dist}(u, v) \le 64 \eps^{-1} \log^3 n $ and the set $I$ would not be $(64 \eps^{-1} \log^3 n )$-independent. On the other hand, for every $v \in I$ we have that either $\Gamma_{H_1}^{(i)}(v) \neq \emptyset$ for every $i \in  \{1, \ldots, 32 \eps^{-1} \log^3 n  \}$ or $B_{H_1}^{(32 \eps^{-1} \log^3 n )}(v)$ contains the whole connected component of $v$, which is of order at least $\log^4 n$. In either case, we have $|B_{H_1}^{(32 \eps^{-1} \log^3 n )}(v)| \ge 32 \eps^{-1} \log^3 n $ and the desired bound on $I$ follows.
\end{proof}

Therefore, by property (iii) of the graph $G$, there exists a family $\{(c_{v, 1}, \ldots, c_{v,g})\}_{v \in I_g}$ of vertex disjoint cycles in $G[D_g]$ such that setting $f(c_v^j) := c_{v, j}$ for every $(v, j) \in I_g \times [g]$ defines an embedding of $H_2 \cup \left[ \bigcup_{v \in I_g} C_v \right]$ into $G[R \cup D_g]$. Since this holds for every $3 \le g \le 2 \log n$ and the sets $D_3, \ldots, D_{2 \log n}$ are disjoint, we obtain an embedding of $H_1$ into $G$.

It is worth remarking that the bound on $I$, which facilitates the application of Lemma \ref{lemma:embedding_cycles}, is the reason why we treat small connected components separately.

\vspace{3mm}
\noindent
\textbf{Phase III: Embedding small components.} As a last step, we have to extend our embedding of $H_1$ to an embedding of the whole graph $H$. Using the facts that $H$ is of order $(1 - \eps)n$ and each component of $H$ which is not in $H_1$ is of order at most $\log^4 n $, we can greedily embed these components one by one as follows. Consider one such component and let $V' \subseteq V(G)$ be the set of vertices which are not an image of some already embedded vertex of $H$. Then $|V'| \ge \eps n$ and, by property $(ii)$ of the graph $G$, $G[V']$ contains an embedding of the required component. Repeating the same argument for each component which has not yet been embedded, we obtain an embedding of the graph $H$.

\section{Concluding remarks}

The observant reader will have noticed that our argument does not apply when $\Delta = 2$. In this case, one cannot hope to show that universality holds all the way down to $p \approx n^{-1/(\Delta - 1)} = n^{-1}$. Even to find a collection of $(1 - \varepsilon) \frac{n}{3}$ disjoint triangles, the probability must be at least $n^{-2/3}$. Since every graph with maximum degree $2$ is a disjoint union of paths and cycles, it is not too hard to use arguments similar to those in Section~\ref{sec:components} to show that for any $\varepsilon > 0$ there exists a constant $c >0$ such that if $p \geq c n^{-2/3}$, the random graph $G(n,p)$ is a.a.s.~$\mathcal H((1- \varepsilon) n, 2)$-universal.

Our proof relies heavily on the fact that the graphs we are hoping to embed are almost spanning rather than spanning. In particular, we neither know how to complete the removed cycles nor how to add small components back into the graph without making heavy use of the almost-spanning condition. Given this, it seems likely that a spanning analogue of Theorem~\ref{thm:main_universal} will require new ideas.

We have already mentioned that a modification of the embedding techniques from Alon et al.~\cite{ACKRRS} was used by Kohayakawa, R\"odl, Schacht and Szemer\'edi \cite{KRSS} to prove a subquadratic bound on the size Ramsey number of bounded-degree graphs. It would be interesting to know whether our embedding technique might be useful for improving this bound. 

\vspace{3mm}
\noindent
{\bf Acknowledgements.} We would like to thank the anonymous referees for their thorough reviews and valuable remarks.

\bibliographystyle{abbrv}
\bibliography{references}

\end{document}